\newcommand{\tens}[1]{\langle #1 \rangle}
\newcommand{\cc}{\mathbb C}
\newcommand{\rr}{\mathbb R}
\newcommand{\cO}{\mathcal O}
\newcommand{\Sym}{\mathrm{Sym}}
\newtheorem*{theorem}{Theorem}
\newtheorem*{fact}{Fact}
\newtheorem{lemma}{Lemma}
\theoremstyle{definition}
\newtheorem*{defi}{Definition}
\begin{document}

\title{Upgrading Subgroup Triple Product Property Triples}
\author{Ivo Hedtke}

\address{Institute of Computer Science, University of Halle-Wittenberg, D-06099 Halle, Germany}
\email{hedtke@informatik.uni-halle.de}

\begin{abstract}
In 2003 \textsc{Cohn} and \textsc{Umans} introduced a group-theoretic approach to fast matrix
multiplication. This involves finding large subsets of a group $G$ satisfying the Triple Product
Property (TPP) as a means to bound the exponent $\omega$ of matrix multiplication. Recently, \textsc{Hedtke} and \textsc{Murthy} discussed several methods to find TPP triples. Because the search space for subset triples is too large, it is only possible to focus on subgroup triples.

We present methods to upgrade a given TPP triple to a bigger TPP triple. If no upgrade is possible we use reduction methods (based on random experiments and heuristics) to create a smaller TPP triple that can be used as input for the upgrade methods.

If we apply the upgrade process for subset triples after one step with the upgrade method for subgroup triples we achieve an enlargement of the triple size of 100 \% in the best case.
\end{abstract}

\maketitle

\section{Introduction}\thispagestyle{empty}

\subsection{A Very Short History of Fast Matrix Multiplication}
The naive algorithm for matrix multiplication is an $\cO(n^3)$ algorithm.
From \textsc{Strassen} (see \cite{Strassen1969}) we know that there is an $\cO(n^{2.81})$ algorithm for this problem.
The fastest
known algorithm runs in $\cO(n^{2.376})$ time (see \cite{Coppersmith1987} from \textsc{Coppersmith} and \textsc{Winograd}). Most researchers believe that an optimal
algorithm with $\cO(n^2)$ runtime exists, but since 1987 no further
progress was made in finding one.

In this paper we only focus on the complexity of fast matrix multiplication, that means on nontrivial bounds for the exponent
$
\omega := \inf \{r \in \rr  :  M(n)=\cO(n^r)\}
$
of matrix multiplication. Here $M(n)$ denotes the number of field operations in characteristic $0$ required to multiply two $(n\times n)$ matrices. Details about the complexity of matrix multiplication and the exponent $\omega$ can be found in~\cite{Buergisser1997}.

\subsection{The Group-Theoretic Approach from Cohn and Umans}
In 2003 \textsc{Cohn} and \textsc{Umans} introduced in \cite{Cohn2003} a group-theoretic approach to fast matrix multiplication. In 2005, together with \textsc{Kleinberg} and \textsc{Szededy}, they could achieve the upper bound $2.41$ for $\omega$ (see \cite{Cohn2005}).
(The reader can find the necessary background on group- and representation theory in \cite{Alperin1991} and \cite{James2001}.)

The main idea is to embed the matrix multiplication over a ring $R$ into the group ring $RG$, where $G$ is a (finite) group. A group $G$ admits such an embedding, if there are subsets $S$, $T$ and $U$ which fulfill the so-called \emph{Triple Product Property}. 

\begin{defi}[right quotient]
Let $G$ be a group and $\emptyset\neq X \subseteq G$ be a nonempty subset of $G$. The \emph{right quotient} $Q(X)$ of $X$ is defined by $Q(X):=\{xy^{-1} : x,y \in X\}$.
\end{defi}

Note that $Q(S)=S$ holds iff $S$ is a subgroup of $G$.

\begin{defi}[Triple Product Property]
We say that the nonempty subsets $S$, $T$ and $U$ of a group $G$ fulfill the \emph{Triple Product Property} (TPP) if for $s\in Q(S)$, $t\in Q(T)$ and $u \in Q(U)$,
$stu=1$ holds iff $s=t=u=1$.
\end{defi}

With $\tens{n,p,m}$ we denote the problem
$
\tens{n,p,m} \colon \cc^{n\times p}\times \cc^{p\times m} \to \cc^{n\times m}$, $(A,B)\mapsto AB
$
of multiplying an $(n\times p)$ with a $(p\times m)$ matrix over $\cc$. We say that a group $G$ \emph{realizes} $\tens{s_1,s_2,s_3}$ if there are subsets $S_i\subseteq G$ of sizes $|S_i|=s_i$, which fulfill the TPP. In this case we call $(S_1,S_2,S_3)$ a \emph{TPP triple} of $G$.

Let us now focus on the embedding of the matrix multiplication into $\cc G$. Let $G$ realize $\tens{n,p,m}$ through the subsets $|S|=n$, $|T|=p$ and $|U|=m$. Let $A$ be an $(n\times p)$ and $B$ be a $(p\times m)$ matrix. We index the entries of $A$ and $B$ with the elements of $S$, $T$ and $U$ instead of numbers. Now we have
\[
(AB)_{s,u}=\sum\nolimits_{t \in T} A_{s,t}B_{t,u}.
\]
\textsc{Cohn} and \textsc{Umans} showed that this is the same as the coefficient of $s^{-1}u$ in the product
\begin{gather*}
\Big( \sum\nolimits_{s\in S, t\in T} A_{s,t}s^{-1}t\Big)
\Big( \sum\nolimits_{\hat t\in T,u\in U} B_{\hat t,u}\hat t^{-1}u\Big).
\end{gather*}
So we can read off the matrix product from the group ring product by looking at the coefficients of $s^{-1}u$ with $s\in S$ and $u\in U$.

\begin{defi}[TPP (subgroup) capacity]\label{def:TPPcap}
We define the \emph{TPP capacity} $\beta (G)$ of a group $G$ as
\[
\beta(G) := \max\{npm : G\text{ realizes }\tens{n,p,m}\}
\] and the \emph{TPP subgroup capacity} $\beta_\mathrm{g}(G)$ of $G$ as 
\[
\beta_\mathrm{g}(G) := \max\{npm : G\text{ realizes }\tens{n,p,m}\text{ through subgroups}\}.
\]
\end{defi}

Note that 
$
\beta (G) \geq \beta_\mathrm{g}(G) \geq  |G|
$
holds, because every group $G$ realizes $\tens{|G|,1,1}$ through the TPP triple $(G,1,1)$, and the search space for $\beta$ includes the one for $\beta_\mathrm{g}$.

\begin{defi}[$r$-character capacity]\label{def:DG}
Let $G$ be a group with the character degrees $\{d_i\}$. We define the \emph{$r$-character capacity} of $G$ as $D_r(G):=\sum_i d_i^r$.
\end{defi}

We can now use $\beta$ and $D_r$ to get new bounds for $\omega$:

\begin{theorem}\textup{\cite[Thm. 4.1]{Cohn2003}}
If $G\neq 1$ is a finite group, then $\beta(G)^{\omega/3} \leq D_\omega (G)$.
\end{theorem}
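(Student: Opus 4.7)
The plan is to follow the standard Cohn--Umans embedding strategy and then convert it, via Wedderburn and Schönhage-type inequalities, into an upper bound on $\omega$.

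First I would unpack the embedding already sketched in the excerpt. Fix any TPP triple $(S,T,U)$ of sizes $(n,p,m)$. The computation that reads off $(AB)_{s,u}$ as the coefficient of $s^{-1}u$ in the product of the group-algebra elements $\sum A_{s,t}s^{-1}t$ and $\sum B_{\hat t,u}\hat t^{-1}u$ is correct precisely because the TPP guarantees that the map $(s,t,u)\mapsto s^{-1}u\cdot(\text{shift from }t)$ identifies distinct triples with distinct words. Thus the bilinear map $\tens{n,p,m}$ reduces to the multiplication bilinear map of the group algebra $\cc G$. Equivalently, the tensor $\tens{n,p,m}$ is a restriction of the structure tensor of $\cc G$, so in particular its rank is bounded by that of $\cc G$.

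Next I would invoke Maschke and Wedderburn. Over $\cc$, the group algebra splits as $\cc G \cong \bigoplus_i \cc^{d_i\times d_i}$, where the $d_i$ are precisely the character degrees of $G$. Multiplication in $\cc G$ therefore reduces (as a bilinear problem) to the simultaneous, \emph{independent} computation of the matrix products $\tens{d_i,d_i,d_i}$. Combining this with the previous step, $\tens{n,p,m}$ reduces to the direct sum $\bigoplus_i \tens{d_i,d_i,d_i}$.

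The final step, which is the main technical point, is to extract an inequality on $\omega$ from such a reduction. For this I would appeal to the asymmetric Schönhage $\tau$-theorem applied to matrix multiplication: any simultaneous realization of $\tens{n,p,m}$ via $\bigoplus_i \tens{d_i,d_i,d_i}$ yields
\[
(npm)^{\omega/3} \;\leq\; \sum_i d_i^{\omega},
\]
essentially because each $\tens{d_i,d_i,d_i}$ has bilinear complexity $O(d_i^{\omega+\varepsilon})$ and the exponent governing the hybrid three-factor problem $\tens{n,p,m}$ is $(\omega/3)\log(npm)$. Specializing the TPP triple to one that realizes $\beta(G)$, the left side becomes $\beta(G)^{\omega/3}$ and the right side is $D_\omega(G)$, finishing the proof.

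The one place where care is needed, and which I expect to be the main obstacle in a fully rigorous write-up, is the passage from ``reduction of tensors'' to the exponent inequality with the $\omega/3$ in the exponent: one has to justify that the asymmetric reduction of $\tens{n,p,m}$ into a sum of \emph{cubic} matrix multiplications is really controlled by $\omega$ (not by some asymmetric variant), and this is exactly the content of the symmetrization trick behind the $\tau$-theorem. Everything else (TPP embedding, Wedderburn decomposition, specializing to $\beta(G)$) is essentially bookkeeping.
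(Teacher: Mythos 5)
The paper quotes this theorem from Cohn--Umans without reproducing a proof, so there is no in-paper argument to compare against; what follows is a check of your proposal against the original Cohn--Umans proof. Your outline (TPP embedding of $\tens{n,p,m}$ into $\cc G$, Wedderburn decomposition $\cc G \cong \bigoplus_i \cc^{d_i\times d_i}$, then extract an exponent inequality) is indeed the Cohn--Umans strategy, and you correctly locate the delicate point in the last step. One caveat: invoking Sch\"onhage's asymptotic sum inequality here is slightly misdirected, because that inequality runs in the opposite direction --- it bounds $\sum_i (n_i p_i m_i)^{\omega/3}$ in terms of the (border) rank of $\bigoplus_i \tens{n_i,p_i,m_i}$, i.e. it is useful when the \emph{direct sum} is the thing being computed cheaply. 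Here the direct sum $\bigoplus_i\tens{d_i,d_i,d_i}$ sits on the \emph{expensive} side of the restriction, so the $\tau$-theorem does not apply as stated. What Cohn--Umans actually use is the elementary argument you gesture at in your final paragraph: by their permutation lemma (quoted as a Fact in this paper), $G$ also realizes $\tens{p,m,n}$ and $\tens{m,n,p}$, hence $G^3$ realizes the \emph{cubic} problem $\tens{npm,npm,npm}$, and $G^{3N}$ realizes $\tens{(npm)^N,(npm)^N,(npm)^N}$; the Wedderburn decomposition of $\cc G^{3N}$ has character degrees that are $3N$-fold products of the $d_i$, and the rank of each $\tens{D,D,D}$ is $O(D^{\omega+\varepsilon})$, giving $R\bigl(\tens{(npm)^N,(npm)^N,(npm)^N}\bigr) \le C\,\bigl(\sum_i d_i^{\omega+\varepsilon}\bigr)^{3N}$. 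Taking logarithms, letting $N\to\infty$, and then $\varepsilon\to 0$ yields $(npm)^{\omega/3} \le D_\omega(G)$, and maximizing over TPP triples gives $\beta(G)^{\omega/3}\le D_\omega(G)$. So the substance of your proposal is right; replacing the appeal to the $\tau$-theorem by this symmetrization-and-tensor-power argument, and making the $N\to\infty$, $\varepsilon\to 0$ limiting step explicit, would make it fully rigorous.
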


Note, that the inequality above yields to a nontrivial upper bound for $\omega$ iff $\beta(G) > D_3(G)$.

\subsection{The Aim of this Work}
With the  theorem above it is possible to find (new) nontrivial bounds for $\omega$. Therefore, we want to compute $\beta(G)$ for as many groups as possible (for example to find a counterexample of a group $G$ that realizes a nontrivial upper bound for $\omega$, or a \emph{new} nontrivial bound for $\omega$). One method is a brute-force search for TPP triples in a given group. But the size of the search space is too large: $\mathcal O(4^{|G|})$. Note that the currently best search algorithm for subset TPP triples has a worst case runtime of $\mathcal O(8^{|G|})$, but it is faster (based on some heuristics) than a full search in the search space of size $\mathcal O(4^{|G|})$. Thus we only know efficient search methods for subgroup TPP triples (described in \cite{HedtkeMurthy2011}). But in many cases, $\beta_\mathrm{g}(G) < \beta (G)$ hold. An example is the dihedral group of order $10$ with $\beta_\mathrm{g}(D_{10})=10$ and $\beta(D_{10})=12$ (see \cite{HedtkeMurthy2011}). Therefore, we use the known search methods only to find subgroup TPP triples and after this we try to upgrade these triples to subset TPP triples of a bigger size. This will give us a better lower bound $\ell$ for the TPP capacity with $\beta_\mathrm{g}(G) \leq \ell(G) \leq \beta(G)$. If we were able to upgrade a given maximal subgroup TPP triple to a bigger subset TPP triple we can use $\ell(G)^{\omega/3} \leq D_\omega (G)$ instead of $\beta_\mathrm{g}(G)^{\omega/3} \leq D_\omega (G)$ to find a better upper bound for $\omega$.

In this paper we present methods to upgrade a given subgroup (or subset) TPP triple (for example found by a brute-force search) to a bigger subset TPP triple. If no upgrade is possible we use a reduction method to create a smaller TPP triple that can be used as input for the upgrade methods. The ways of how to shrink a given TPP triple are based on random experiments and heuristics.

\subsection{Fundamentals} Now we collect some facts about TPP triples which we will use later.
In the whole paper we only consider \emph{finite nonabelian groups}, because we want to work with $|G|$ and \textsc{Cohn} and \textsc{Umans} proved that $\beta(G)=|G|=D_2(G)\leq D_3(G)$ if $G$ is abelian (see \cite[Lem. 3.1]{Cohn2003}).

\begin{fact}\textup{\cite[Thm. 5]{Hedtke2011}}
If $(S,T,U)$ is a TPP triple in $G$, then
\begin{gather}\label{eq:Q}
|Q(S)| + |Q(T)| + |Q(U)| \leq |G| + 2.
\end{gather}
\end{fact}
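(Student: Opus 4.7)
The two pieces of structure I would use are: (i) every $Q(X)$ is closed under inversion (since $(xy^{-1})^{-1}=yx^{-1}$) and contains the identity (take $y=x$), and (ii) the TPP restated symmetrically: if $s\in Q(S)$, $t\in Q(T)$, $u\in Q(U)$ satisfy $st=u^{-1}$, then $s=t=u=1$. The goal is to pack $Q(S)$, $Q(T)$, $Q(U)$ into $G$ with as little overlap as possible, which will force their combined size to be at most $|G|+2$.

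First I would show that the three sets $Q(S)$, $Q(T)$, $Q(U)$ pairwise intersect only in $\{1\}$. For instance, if $w\in Q(S)\cap Q(T)$ with $w\neq 1$, then $w\in Q(S)$, $w^{-1}\in Q(T)$ (by the closure under inversion), and $1\in Q(U)$, and they multiply to $1$, contradicting the TPP. The same argument handles the other two pairs. In particular $|Q(S)\cup Q(T)|=|Q(S)|+|Q(T)|-1$.

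Next I would combine $Q(S)$ and $Q(T)$ multiplicatively. Consider the product set $Q(S)\cdot Q(T)$. Because $1\in Q(T)$ and $1\in Q(S)$ we have $Q(S)\cup Q(T)\subseteq Q(S)\cdot Q(T)$. Now any element of $Q(S)\cdot Q(T)$ that also lies in $Q(U)$ must be $1$: if $st=u$ with $s\in Q(S)$, $t\in Q(T)$, $u\in Q(U)$, then $st\cdot u^{-1}=1$ with $u^{-1}\in Q(U)$, forcing $s=t=u=1$ by the TPP. Hence $(Q(S)\cup Q(T))\cap Q(U)=\{1\}$.

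Finally I would count in $G$: the two subsets $Q(S)\cup Q(T)$ and $Q(U)$ overlap only in $\{1\}$, so
\[
|Q(S)\cup Q(T)|+|Q(U)|-1\;\leq\;|G|.
\]
Substituting $|Q(S)\cup Q(T)|=|Q(S)|+|Q(T)|-1$ from the first step yields~\eqref{eq:Q}. The only slightly delicate point is the step that promotes the pairwise disjointness of $Q(S)$ and $Q(U)$ to disjointness of the product set $Q(S)\cdot Q(T)$ with $Q(U)$; the inversion-closure of $Q(U)$ is what makes this work, so I would make sure to spell that detail out carefully.
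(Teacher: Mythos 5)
Your argument is correct and follows what is essentially the standard route to this bound (the paper itself only cites \cite{Hedtke2011} without reproducing the proof): show that $Q(S)$, $Q(T)$, $Q(U)$ pairwise meet only in $\{1\}$ via the TPP, using $1\in Q(X)$ and closure of $Q(X)$ under inversion, then count by inclusion--exclusion. One small redundancy worth noting: the detour through the product set $Q(S)\cdot Q(T)$ in your third step is unnecessary, since $(Q(S)\cup Q(T))\cap Q(U)=(Q(S)\cap Q(U))\cup(Q(T)\cap Q(U))=\{1\}$ already follows from the pairwise disjointness you established in your second step.
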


\begin{fact}\textup{\cite[Obs. 3.1]{Neumann2011}}
If $(S,T,U)$ is a TPP triple in $G$, then 
\begin{gather}\label{eq:Neumann}
|S|(|T|+|U|-1)\leq |G|, \quad |T|(|S|+|U|-1)\leq |G| \quad \text{and} \quad |U|(|S|+|T|-1)\leq |G|.
\end{gather}
\end{fact}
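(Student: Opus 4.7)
The plan is to prove the first inequality $|S|(|T|+|U|-1)\le|G|$; the other two then follow by the symmetry of the TPP under permutations of $(S,T,U)$ (a short rearrangement shows that $stu=1$ is equivalent, after inversion and/or cyclic rotation, to the analogous relation in any reordering of the three factors).

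Fix $t_0\in T$ and $u_0\in U$, and set
\[
X \;:=\; T\,\cup\,Uu_0^{-1}t_0 \;\subseteq\; G.
\]
The first task is to verify $|X|=|T|+|U|-1$, i.e., that $T\cap Uu_0^{-1}t_0=\{t_0\}$. If $t=uu_0^{-1}t_0$ for some $t\in T$, $u\in U$, then $(tt_0^{-1})(u_0u^{-1})=1$. Reading this as $1\cdot(tt_0^{-1})\cdot(u_0u^{-1})=1$ with factors in $Q(S)$, $Q(T)$, $Q(U)$, the TPP forces $tt_0^{-1}=1$ and $u_0u^{-1}=1$, so $t=t_0$ and $u=u_0$.

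The core step is to show that the left translates $sX$, $s\in S$, are pairwise disjoint. Suppose $s_1x_1=s_2x_2$ with $s_1\ne s_2$ and $x_i\in X$; split into four subcases. If both $x_i\in T$, the equation becomes $(s_2^{-1}s_1)\cdot(t_1t_2^{-1})\cdot 1=1$; if both $x_i\in Uu_0^{-1}t_0$, cancelling $u_0^{-1}t_0$ on the right yields $(s_2^{-1}s_1)\cdot 1\cdot(u_1u_2^{-1})=1$. In the mixed case $s_1t_1=s_2u_2u_0^{-1}t_0$, rearrangement produces
\[
(s_2^{-1}s_1)\cdot(t_1t_0^{-1})\cdot(u_0u_2^{-1})=1,
\]
and the reverse mixed case $s_1u_1u_0^{-1}t_0=s_2t_2$ yields $(s_1^{-1}s_2)\cdot(t_2t_0^{-1})\cdot(u_0u_1^{-1})=1$ in the same shape. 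In every case the equation sits in the TPP form $Q(S)\cdot Q(T)\cdot Q(U)\ni 1$, so the TPP forces $s_1=s_2$, contradicting our assumption.

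Combining, $\bigsqcup_{s\in S}sX\subseteq G$ has cardinality $|S|\cdot|X|=|S|(|T|+|U|-1)$, which establishes the bound. The main technical obstacle is the choice of $X$: inserting the right-multiplier $u_0^{-1}t_0$ is precisely what makes the mixed subcases rearrange cleanly into the standard TPP form without leaving conjugation factors behind, which is why the ``obvious'' choice $X=T\cup U$ would fail.
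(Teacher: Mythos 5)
Your proof is correct. The paper records this statement only as a cited fact from Neumann's note and supplies no proof of its own, so there is no in-paper argument to compare against; but what you give is the expected packing argument: build a set $X$ of size $|T|+|U|-1$ and show the left $S$-translates $sX$, $s\in S$, are pairwise disjoint. The more common presentation first normalizes to a \emph{basic} triple with $1\in S\cap T\cap U$ (a notion the paper attributes to Neumann) and then takes $X=T\cup U$; your variant avoids that normalization by shifting $U$ to $Uu_0^{-1}t_0$, which glues $T$ and $U$ along the single point $t_0$, forces $|X|=|T|+|U|-1$ via the TPP, and makes every collision $s_1x_1=s_2x_2$ rearrange directly into the form $Q(S)\cdot Q(T)\cdot Q(U)\ni 1$. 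I checked all four disjointness subcases and the bookkeeping is right; your closing remark about why the naive $X=T\cup U$ fails without normalization is also accurate (one loses control of $t_1u_2^{-1}$ in the mixed case unless $1\in T\cap U$). The appeal to permutation-invariance of TPP triples for the other two inequalities is legitimate and is indeed recorded in the paper as a separate fact.
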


\begin{fact}\textup{\cite[Lem. 2.1]{Cohn2003}}
If $G$ realizes $\tens{n,p,m}$, then it does so for every permutation of $n$, $p$ and $m$.
\end{fact}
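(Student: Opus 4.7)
My plan is to prove the statement by showing that the triple product property is preserved under two basic operations on the defining triple $(S,T,U)$: cyclic rotation and reversal. Since the symmetric group on three letters is generated by a $3$-cycle and a transposition, any permutation of $(n,p,m)$ will then be realized.

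First I would note the elementary but crucial fact that right quotients are closed under inversion: $Q(X)^{-1}=Q(X)$, because $(xy^{-1})^{-1}=yx^{-1}\in Q(X)$ for any $x,y\in X$. This will let me identify inverses of right-quotient elements with elements of the same right quotient, which is the bridge between the "reversed" TPP condition and the original one.

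Next I would handle the two generating permutations. For cyclic rotation, suppose $s\in Q(S)$, $t\in Q(T)$, $u\in Q(U)$ satisfy $tus=1$; multiplying on the left by $s$ (and using $s^{-1}=us^{-1}u^{-1}\cdot\ldots$ — actually more simply, from $tus=1$ we get $s\cdot tu = s\cdot s^{-1}=1$, so $stu$... let me just use that $abc=1$ in a group is equivalent to $bca=1$ and to $cab=1$). Hence $(T,U,S)$ and $(U,S,T)$ are TPP triples whenever $(S,T,U)$ is. For the reversal, suppose $u\in Q(U)$, $t\in Q(T)$, $s\in Q(S)$ satisfy $uts=1$. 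Taking inverses yields $s^{-1}t^{-1}u^{-1}=1$ with $s^{-1}\in Q(S)$, $t^{-1}\in Q(T)$, $u^{-1}\in Q(U)$ by the observation above; applying the TPP of $(S,T,U)$ forces $s^{-1}=t^{-1}=u^{-1}=1$, hence $s=t=u=1$. So $(U,T,S)$ is a TPP triple.

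Combining the two operations generates all six permutations of the subsets, so $G$ realizes every permutation of $\langle n,p,m\rangle$. There is really no obstacle here: the whole argument rests on the trivial identities $abc=1\Leftrightarrow bca=1$ and $abc=1\Leftrightarrow c^{-1}b^{-1}a^{-1}=1$ in a group, together with $Q(X)^{-1}=Q(X)$; the only thing one has to be a bit careful about is keeping track of which quotient each inverted element belongs to, which is immediate from the inversion-closure of $Q$.
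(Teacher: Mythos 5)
Your proof is correct, and it is the standard argument: the paper itself states this as a cited Fact from Cohn and Umans without reproducing a proof, and the Cohn--Umans proof is exactly the one you give, namely that cyclic permutations follow from $abc=1\Leftrightarrow bca=1$ while the transposition follows from taking inverses together with $Q(X)^{-1}=Q(X)$. The only blemish is the aborted algebraic detour in the middle of your cyclic-rotation paragraph, which you then correctly replace with the clean group identity; tidying that up would make the writeup cleaner but the mathematics is sound.
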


\begin{fact}\textup{\cite[Obs. 2.5]{HedtkeMurthy2011}}
It is sufficient to search for triples with
\begin{gather}\label{eq:Order}
|S| \geq |T| \geq |U|.
\end{gather}
\end{fact}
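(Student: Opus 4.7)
The plan is to derive this observation as an immediate consequence of the preceding Fact (the permutation invariance of realization). The statement ``it is sufficient to search for triples with $|S|\geq|T|\geq|U|$'' is about the quantities $\beta(G)$ and $\beta_{\mathrm g}(G)$ from Definition~\ref{def:TPPcap}, which are defined as maxima of $npm$ over all realizable triples $(n,p,m)$. So what needs to be shown is that the maximum over all realizable $(n,p,m)$ equals the maximum over only those realizable $(n,p,m)$ with $n\geq p\geq m$.

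The first step is to note that the product $npm$ is symmetric in its three arguments. The second step is to apply the previous Fact: if $G$ realizes $\tens{n,p,m}$, then $G$ realizes $\tens{n',p',m'}$ for every permutation $(n',p',m')$ of $(n,p,m)$. Thus, given any realizable $(n,p,m)$, we may pass to the (unique up to ties) permutation that is sorted in nonincreasing order, obtaining a realizable triple with the same value of $npm$ satisfying the ordering condition. Consequently, restricting the search to sorted triples does not change the optimum value of $\beta(G)$ (or $\beta_{\mathrm g}(G)$, since the previous Fact preserves the property of being a subgroup triple).

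There is no real obstacle here: the proof is essentially a one-line appeal to the symmetry of the product and to the permutation invariance already established. The only point worth mentioning explicitly is that the reduction is lossless at the level of the objective $npm$, so every candidate for the maximum survives in the restricted search space.
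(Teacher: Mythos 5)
Your argument is correct and is the natural, essentially forced deduction. The paper itself states this as a cited Fact from \cite{HedtkeMurthy2011} without reproducing a proof, but what you write --- the symmetry of the objective $npm$ combined with the permutation invariance of realization from the immediately preceding Fact, plus the remark that permuting a subgroup triple yields a subgroup triple --- is precisely the intended one-line justification, with no gaps.
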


Note, that in the following text we always assume that \eqref{eq:Order} holds.

\begin{defi}[size]
We define
$
|(S,T,U)|=|S| \cdot |T| \cdot |U|
$
as the \emph{size} of a triple $(S,T,U)$.
\end{defi}

\begin{defi}[basic]
According to \textsc{Neumann} we call a TPP triple $(S,T,U)$ that fulfills $1\in S\cap T\cap U$ a \emph{basic} TPP triple.
\end{defi}

Note that for a basic TPP triple herefrom $1 = S\cap T = S\cap U = T\cap U$ follows by \cite[Thm.~1]{Hedtke2011}. The following fact is an equivalent to the TPP definition. It is usefull for TPP tests in the following sections.

\begin{fact}\textup{\cite[Thm. 3.1]{HedtkeMurthy2011}}
Three subsets of $G$ form a basic TPP triple $(S,T,U)$ iff
\begin{gather}\label{eq:HedtkeMurthy}
1 \in S\cap T \cap U,\qquad
Q(T) \cap Q(U) = 1\qquad\text{and}\qquad
Q(S) \cap Q(T)Q(U) = 1.
\end{gather}
\end{fact}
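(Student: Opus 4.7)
The plan is to verify both directions by directly chasing the TPP definition. I will repeatedly use two elementary facts: $Q(X)$ is closed under inversion, since $(xy^{-1})^{-1}=yx^{-1}\in Q(X)$, and $1\in Q(X)$ for every nonempty $X$ (take $x=y$). Consequently $1\in Q(S)\cap Q(T)\cap Q(U)$ in any triple, which I will feed into the TPP condition whenever I need a trivial factor.

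For the forward direction, assume $(S,T,U)$ is a basic TPP triple. To obtain $Q(T)\cap Q(U)=1$, I would take $x$ in the intersection and set $s:=1\in Q(S)$, $t:=x\in Q(T)$, $u:=x^{-1}\in Q(U)$; then $stu=1$, so the TPP forces $x=t=1$. To obtain $Q(S)\cap Q(T)Q(U)=1$, write a putative element as $x=tu$ with $x\in Q(S)$, $t\in Q(T)$, $u\in Q(U)$, and put $s:=x^{-1}\in Q(S)$, so that $stu=1$ and the TPP gives $s=1$, hence $x=1$.

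For the converse, assume the three right-hand conditions hold. Given $s\in Q(S)$, $t\in Q(T)$, $u\in Q(U)$ with $stu=1$, I would rewrite this as $s^{-1}=tu\in Q(T)Q(U)$. Since $s^{-1}\in Q(S)$ as well, the third hypothesis yields $s^{-1}=1$, whence $s=1$. Then $tu=1$ gives $t=u^{-1}\in Q(T)\cap Q(U)$, which equals $\{1\}$ by the second hypothesis, so $t=u=1$.

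There is no real obstacle here: the argument is symmetric and very short. The only care needed is to track which element lives in which right quotient and to remember that the apparent asymmetry of the three hypotheses (with $S$ playing a distinguished role compared to $T$ and $U$) is absorbed by the freedom to move $s$ to the other side of the equation as $s^{-1}$, exploiting closure of $Q(\cdot)$ under inversion.
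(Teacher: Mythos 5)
Your proof is correct. The paper does not reprove this fact—it simply cites it from Hedtke and Murthy—so there is no in-text argument to compare against, but your direct verification is exactly the standard one: in the forward direction you choose test elements for the TPP using $1\in Q(X)$ and closure of $Q(X)$ under inversion, and in the converse you move $s$ across the equation as $s^{-1}\in Q(S)$ and then split $tu=1$ via $Q(T)\cap Q(U)=1$. Both steps are carried out cleanly and nothing is missing.
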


Note that the statement from the fact above holds for any permutation of $S$, $T$ and $U$. That means, $(S_1,S_2,S_3)$ is a basic TPP triple iff
\begin{gather}\label{fact:Sym}
1\in S_1 \cap S_2 \cap S_3,  ~
Q(S_{\pi(2)}) \cap Q(S_{\pi(3)}) = 1, ~
Q(S_{\pi(1)}) \cap Q(S_{\pi(2)})Q(S_{\pi(3)})=1 \quad \forall \pi \in \Sym(3).
\end{gather}

Depending on the context, the symbol $1$ will denote either the number $1$, the group identity $1_G$, or the trivial subgroup $\{1_G\}$.

\subsection{Overview of the Upgrade Process}
The diagram in Figure~\ref{fig:diag} gives us an overview of the different upgrade steps and methods described in this paper. The details can be found in the corresponding (sub)sections.

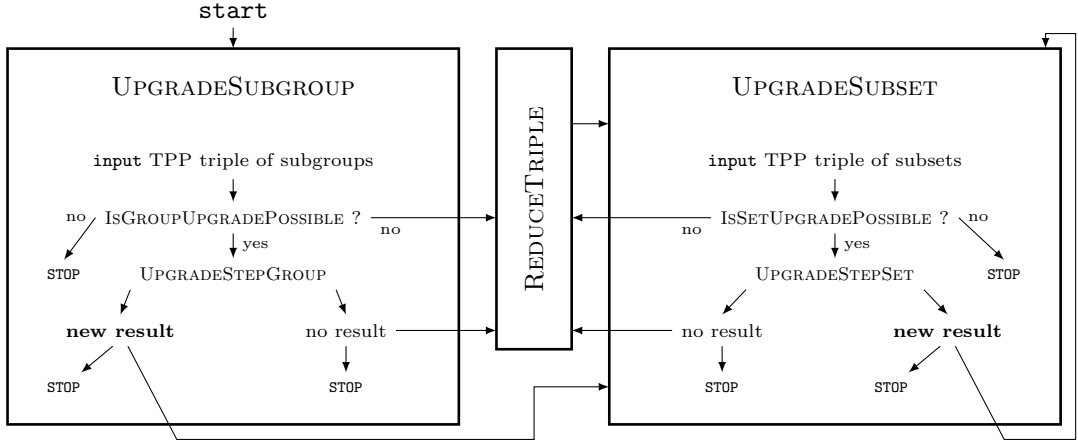
\begin{figure}
\centering
\begin{tikzpicture}
  \draw (3,.5) node (start) {\texttt{start}};
  \draw[-latex] (start) -- (3,0);
  \draw[line width=1pt] (0,0) rectangle (6,-5);
  \draw[line width=1pt] (6.5,0) rectangle node[rotate=90] {\textsc{ReduceTriple}} (7.5,-4);
  \draw[line width=1pt] (8,0) rectangle (14,-5);

  \draw     (3,-0.5)          node        {\textsc{UpgradeSubgroup}};
  \draw     (3,-1.5)          node (grp1) {\scriptsize \texttt{input} TPP triple of subgroups};
  \draw     (3,-2.25)         node (grp2) {\scriptsize \textsc{IsGroupUpgradePossible} ?};
  \draw[-latex] (grp1)                        -- (grp2);
  \draw[-latex] (grp2.east)       node        [below right] {\tiny no} -- (6.5,-2.25);
  \draw     (3,-2.625)        node        [right] {\tiny yes};
  \draw     (0.75,-3)         node (s1)   {\tiny \texttt{STOP}};
  \draw[-latex] (grp2.west)       node        [left] {\tiny no} -- (s1.north);
  \draw     (3,-3)            node (grp3) {\scriptsize \textsc{UpgradeStepGroup}};
  \draw[-latex] (grp2)                        -- (grp3);
  \draw     (1.5,-3.75)       node (res1) {\scriptsize\bfseries new result};
  \draw     (4.5,-3.75)       node (res2) {\scriptsize no result};
  \draw[-latex] (grp3.south west)             -- (res1.north);
  \draw[-latex] (grp3.south east)             -- (res2.north);
  \draw     (0.75,-4.5)       node (st1)  {\tiny \texttt{STOP}};
  \draw     (4.5,-4.5)        node (st2)  {\tiny \texttt{STOP}};
  \draw[-latex] ($(res1.south) - (.1,0)$)             -- (st1);
  \draw[-latex] (res2)                        -- (st2);
  \draw[-latex] (res2)                        -- (6.5,-3.75);
  \draw[-latex] ($(res1.south) + (.1,0)$)             -- (2.25,-5.2) -- (7,-5.2) -- (7,-4.5) -- (8,-4.5);

  \draw[-latex] (7.5,-1) -- (8,-1);

  \draw     (3+8,-0.5)          node        {\textsc{UpgradeSubset}};
  \draw     (3+8,-1.5)          node (lgrp1) {\scriptsize \texttt{input} TPP triple of subsets};
  \draw     (3+8,-2.25)         node (lgrp2) {\scriptsize \textsc{IsSetUpgradePossible} ?};
  \draw[-latex] (lgrp1)                        -- (lgrp2);
  \draw[-latex] (lgrp2.west)       node        [below left] {\tiny no} -- (7.5,-2.25);
  \draw     (3+8,-2.625)        node        [right] {\tiny yes};
  \draw     (13.25,-3)         node (ls1)   {\tiny \texttt{STOP}};
  \draw[-latex] (lgrp2.east)       node        [right] {\tiny no} -- (ls1.north);
  \draw     (3+8,-3)            node (lgrp3) {\scriptsize \textsc{UpgradeStepSet}};
  \draw[-latex] (lgrp2)                        -- (lgrp3);
  \draw     (1.5+8,-3.75)       node (lres1) {\scriptsize no result};
  \draw     (4.5+8,-3.75)       node (lres2) {\scriptsize\bfseries new result};
  \draw[-latex] (lgrp3.south west)             -- (lres1.north);
  \draw[-latex] (lgrp3.south east)             -- (lres2.north);
  \draw     (11.75,-4.5)       node (lst1)  {\tiny \texttt{STOP}};
  \draw     (9.5,-4.5)        node (lst2)  {\tiny \texttt{STOP}};
  \draw[-latex] (lres1) -- (7.5,-3.75);
  \draw[-latex] (lres1) -- (lst2);
  \draw[-latex] ($(lres2.south) - (.1,0)$) -- (lst1);
  \draw[-latex] ($(lres2.south) + (.1,0)$) -- (13.25,-5.2) -- (14.2,-5.2) -- (14.2,0.2) -- (13.8,.2) -- (13.8,0);
\end{tikzpicture}
\caption{Overview of the different upgrade steps and methods.}
\label{fig:diag}
\end{figure}

\section{Upgrading of TPP Subgroup Triples}

\noindent Consider a given \emph{maximal} TPP triple $(S,T,U)$ of subgroups of a group $G$ (for example found by a brute-force search), which means that there is no other TPP triple of subgroups in $G$ with a bigger size. Because of $\beta_\mathrm{g}(G) \leq \beta(G)$ it is possible, that there is a TPP triple $(\tilde S, \tilde T, \tilde U)$ of subsets of $G$ with a bigger size
\[
|(S,T,U)| < |(\tilde S,\tilde T,\tilde U)|.
\]
In this section we describe a method that tries to find such a bigger TPP triple by upgrading the given triple $(S,T,U)$, i.\,e. $S \subseteq \tilde S$, $T \subseteq \tilde T$ and $U\subseteq \tilde U$.

\subsection{The upgrade step} If an upgrade of a given triple $(S,T,U)$ is possible (see the next subsection) we proceed as follows: We compute the set
\[
C:=C_{S,T,U}^G := G \setminus(S \cup T\cup U)
\]
of all possible candidates for such an upgrade. If $C\neq \emptyset$ we pick a $c\in C$ and check, if with
\[
\tilde S := S \cup \{c\}, \qquad 
\tilde T := T \cup \{c\} \qquad \text{or} \qquad 
\tilde U := U \cup \{c\}
\]
one of the following triples fulfills the TPP too:
\[
(\tilde S,T,U), \qquad 
(S, \tilde T, U) \qquad \text{or} \qquad 
(S, T, \tilde U).
\]
If this is the case, we start the upgrade step again for one of the triples above, but this time the upgrade method for subset triples. If none of the triples fulfill the TPP, we update $C:=C\setminus\{c\}$ and try it again. If $C=\emptyset$ there is no possibility to upgrade the given triple.

\subsection{Is an upgrade possible?} We don't give a complete answer to this question (that means we will not answer the question whether the upgrade process will give us a bigger TPP triple), but we will indicate if the upgrade process could yield to a new subset TPP triple, which leads to a speedup of the process.

First we focus on the upgrade itself. Let $X$ be a subset (or subgroup) of $G$ and $c\in G$ not be in $X$. Then we can compute $\tilde X := X\cup \{c\}$ like
\begin{gather}\label{eq:upgrade}
Q(\tilde X) = Q(X \cup \{c\}) = Q(X) \cup cX^{-1} \cup Xc^{-1} \stackrel{X \text{ is group}}{=}
X \cup cX \cup Xc^{-1}.
\end{gather}

\begin{lemma}\label{lemm:SchrankenUpdate}
If $X$ is a subgroup of $G$ and $c\in G \setminus X$, then
\[
2|X| \leq |Q(X \cup \{c\})| \leq 3|X|.
\]
\end{lemma}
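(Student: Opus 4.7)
The plan is to use the formula from equation~\eqref{eq:upgrade}, which in the subgroup case gives the explicit decomposition
\[
Q(X \cup \{c\}) = X \cup cX \cup Xc^{-1},
\]
and then bound the size of this union from above and below by elementary coset arguments.

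For the upper bound, I would simply observe that $|X| = |cX| = |Xc^{-1}|$ since left and right multiplication by a fixed group element are bijections on $G$. The union of three sets of size $|X|$ has at most $3|X|$ elements, and the inequality is tight exactly when the three sets are pairwise disjoint.

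For the lower bound, the key observation is that $X$ and $cX$ are disjoint. Indeed, $X$ is a subgroup, so $cX$ is a left coset, and $cX = X$ would force $c \in X$, contradicting the hypothesis $c \in G \setminus X$. Since distinct left cosets are disjoint, $X \cap cX = \emptyset$, hence $|X \cup cX| = 2|X|$. As $X \cup cX \subseteq Q(X \cup \{c\})$, the bound $2|X| \leq |Q(X \cup \{c\})|$ follows immediately.

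There is no real obstacle here; the only subtle point is justifying the first equality in~\eqref{eq:upgrade} for subgroups, which is exactly what the paper has already recorded. The third term $Xc^{-1}$ is responsible for the possible slack between $2|X|$ and $3|X|$: it may or may not be contained in $X \cup cX$, which is why the upper bound cannot in general be improved and the lower bound cannot be tightened using only the assumption $c \notin X$.
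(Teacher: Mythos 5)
Your proof is correct and follows essentially the same route as the paper: both bound $|Q(X \cup \{c\})|$ above by counting the three equal-sized pieces of the union $X \cup cX \cup Xc^{-1}$, and both establish the lower bound by showing $X \cap cX = \emptyset$ from the hypothesis $c \notin X$. The only cosmetic difference is that you invoke the general fact that distinct left cosets are disjoint, whereas the paper derives the contradiction directly by writing a putative common element $y = cx$ and concluding $c = yx^{-1} \in X$; these are the same observation.
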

\begin{proof}
The right-hand-side of the statement follows directly from \eqref{eq:upgrade}. We prove the left-hand-side by showing that $X\cap cX = \emptyset$. Assume that there is a common element $y\in X\cap cX$. In this case there is a $x\in X$ with $y=cx$, and so $c=yx^{-1} \in X$ because $X$ is a group, a contradiction.
\end{proof}

To check if the upgrade process is possible, we compute the set $I(S,T,U) \subseteq \{S, T, U\}$ of the possible places for an upgrade. If $I(S,T,U)=\emptyset$ no upgrade of $(S,T,U)$ is possible. In the other case we start the upgrade procedure. Let $\xi:=|G| + 2 - |S| - |T| - |U|$. With this we can write \eqref{eq:Q} as $0 \leq \xi$. Now we define $I$ by combining \eqref{eq:Q}, \eqref{eq:Neumann} and Lemma~\ref{lemm:SchrankenUpdate} as follows:
\begin{align*}
S\in I(S,T,U) \Leftrightarrow &
\begin{cases}
|S| \leq \xi\qquad\qquad\qquad\qquad\qquad & (\ref{eq:Q}_\mathrm{g}^S)\\
\text{and}\\
(|S| + 1) ( |T| + |U|  - 1) \leq |G| & (\ref{eq:Neumann}^S)
\end{cases}\\
T\in I(S,T,U) \Leftrightarrow &
\begin{cases}
|T| \leq \xi\qquad\qquad\qquad\qquad\qquad & (\ref{eq:Q}_\mathrm{g}^T)\\
\text{and}\\
|S| ( |T| + |U|) \leq |G| & (\ref{eq:Neumann}^T)
\end{cases}\\
U\in I(S,T,U) \Leftrightarrow &
\begin{cases}
|U| \leq \xi\qquad\qquad\qquad\qquad\qquad & (\ref{eq:Q}_\mathrm{g}^U)\\
\text{and}\\
|S| ( |T| + |U|) \leq |G| & (\ref{eq:Neumann}^U)
\end{cases}
\end{align*}

Remember that we assumed that $|S|\geq |T| \geq |U|$. Furthermore, note that $(\ref{eq:Neumann}^T) = (\ref{eq:Neumann}^U)$. The following algorithm computes $I(S,T,U)$:

\begin{algorithm}[H]
\DontPrintSemicolon
$I:=\emptyset$\;
\lIf{$(\ref{eq:Neumann}^S)$ \textbf{\textup{and}} $(\ref{eq:Q}_\mathrm{g}^S)$}{$I:=I\cup\{S\}$\;}
\If{$(\ref{eq:Neumann}^T)$}{
\lIf{$(\ref{eq:Q}_\mathrm{g}^T)$}{$I:=I\cup\{T\}$\;}
\lIf{$(\ref{eq:Q}_\mathrm{g}^U)$}{$I:=I\cup\{U\}$\;}
}
\Return{$I$\;}
\caption{\textsc{IsGroupUpgradePossible}$(S,T,U)$}
\end{algorithm}

\subsection{Checking all possibilities} Finally we check all possible upgrades $C$ in $I(S,T,U)$ to compute the set $\mathcal U(S,T,U)$ of all upgrades of the given subgroup TPP triple $(S,T,U)$. We do this by explicit calculations of $Q(\cdot)$ in a specialized algorithm.
If $\mathcal U=\emptyset$ there is no upgrade of the given triple. In this case we can \emph{stop} the process or we use the \emph{reduction method} described later to reduce the given triple to a smaller \emph{subset} TPP triple as an input for the \emph{upgrade process for subsets}, which we also disuss later.
If $\mathcal U \neq \emptyset$ there are upgrades and we can compute the new \emph{lower bound} $\ell(G)$ for $\beta(G)$. Maybe we can get a better upper bound for $\omega$ with it. Or we use the upgraded TPP triple (which is no subgroup TPP triple any more!) as input for the upgrade process for subsets. In this case it is possible, that we get an even better bound $\ell' > \ell$.

\begin{algorithm}[H]
\DontPrintSemicolon
$\mathcal U:=\emptyset$; ~~~ $I:={}$\textsc{IsGroupUpgradePossible}$(S,T,U)$; ~~~ $C:=C_{S,T,U}^G$\;
\If{$S\in I$}{
$P:=TU$\;
\ForEach{$c\in C$}
{
\lIf{\textsc{SpecialTPPTestGroup}$(S,c,P)$}{$\mathcal U := \mathcal U \cup \{(S\cup \{c\}, ~ T, ~ U)\}$\;}
}
}
\If{$T\in I$}{
$P:=SU$\;
\ForEach{$c\in C$}
{
\lIf{\textsc{SpecialTPPTestGroup}$(T,c,P)$}{$\mathcal U := \mathcal U \cup \{(S, ~ T\cup \{c\}, ~ U)\}$\;}
}
}
\If{$U\in I$}{
$P:=ST$\;
\ForEach{$c\in C$}
{
\lIf{\textsc{SpecialTPPTestGroup}$(U,c,P)$}{$\mathcal U := \mathcal U \cup \{(S, ~ T, ~ U\cup \{c\})\}$\;}
}
}
\Return{$\mathcal U$\;}
\caption{\textsc{UpgradeStepGroup}$(S,T,U)$}
\end{algorithm}

The algorithm above\enlargethispage{\baselineskip} computes the set of \emph{all} upgrades. This could need a lot of time and space. Therefore, one can consider another version that only returns the biggest upgrade (that means an upgraded TPP triple $T'$ with $|T|\leq |T'|$ for all $T\in \mathcal U$). Hence we change the order of the tests for the members in $I$ (because we assume that \eqref{eq:Order} holds) and return the first result we find:

\begin{algorithm}[H]
\DontPrintSemicolon
$I:={}$\textsc{IsGroupUpgradePossible}$(S,T,U)$; ~~~ $C:=C_{S,T,U}^G$\;
\If{$U\in I$}{
$P:=ST$\;
\ForEach{$c\in C$}
{
\lIf{\textsc{SpecialTPPTestGroup}$(U,c,P)$}{\Return{$(S, ~ T, ~ U\cup \{c\})$\;}}
}
}
\If{$T\in I$}{
$P:=SU$\;
\ForEach{$c\in C$}
{
\lIf{\textsc{SpecialTPPTestGroup}$(T,c,P)$}{\Return{$(S, ~ T\cup \{c\}, ~ U)$\;}}
}
}
\If{$S\in I$}{
$P:=TU$\;
\ForEach{$c\in C$}
{
\lIf{\textsc{SpecialTPPTestGroup}$(S,c,P)$}{\Return{$(S\cup \{c\}, ~ T, ~ U)$\;}}
}
}

\Return{$\emptyset\;$}
\caption{\textsc{UpgradeStepGroupBiggest}$(S,T,U)$}
\end{algorithm}

We will use \textsc{UpgradeStepGroup} only, if \textsc{UpgradeStepGroupBiggest} found an upgrade.

\subsection{Special TPP tests} As we are in the process of upgrading subgroups, there are easy tests for the TPP. We use the equivalent to the TPP by \textsc{Hedtke}\&\textsc{Murthy} (see \eqref{eq:HedtkeMurthy}).

If we upgrade $S$ to $\tilde S$, we only need to check $Q(\tilde S)\cap TU=1$, because $T$ and $U$ are subgroups and we don't upgrade them and so the other equations in \eqref{eq:HedtkeMurthy} are fulfilled. We focus on this in detail. We have
\begin{align*}
Q(\tilde S)\cap TU= (S \cup cS \cup Sc^{-1}) \cap TU
= \underbrace{(S \cap TU)}_{=1} \cup \underbrace{(cS \cap TU)}_{=:A} \cup \underbrace{(Sc^{-1} \cap TU)}_{=:B}.
\end{align*}
It follows that $Q(\tilde S)\cap TU=1$ iff $A=\emptyset$ and $B=\emptyset$. ($A=1$ would also be possible, but in that case it follows that $c\in S$, a contradiction. The same holds for $B$.) 

From \eqref{fact:Sym} we know, that the TPP does not depend on the order of $S$, $T$ and $U$.
Thus we only have to check $cT\cap SU = Tc^{-1}\cap SU=\emptyset$ if we upgrade $T$. The same holds for $U$.

\begin{algorithm}[H]
\DontPrintSemicolon
\eIf{$cX \cap P=\emptyset$ \textbf{\textup{and}} $Xc^{-1} \cap P=\emptyset$}{\Return{\texttt{true}}\;}{\Return{\texttt{false}}\;}
\caption{\textsc{SpecialTPPTestGroup}$(X,c,P)$}
\end{algorithm}

\section{Upgrading of TPP Subset Triples}

\noindent In the case that the given initial basic TPP triple $(S,T,U)$ is a triple of subsets, where at least one of $S$, $T$ or $U$ is no group, we have to change the upgrade procedure. It would be possible to use the method developed in this section also for subgroup TPP triples, but the method for subgroups is faster.

Without loss of generality we assume, that in the given TPP triple $(S,T,U)$ all of $S$, $T$ and $U$ are subsets. In this case we define the set of upgrade candidates of $S$ as
\[
C(S):=C_{S,T,U}^G(S) := G \setminus (S \cup Q(T) \cup Q(U)),
\]
and $C(T)$ and $C(U)$ in the same way. Again we first focus on the update of $Q(\cdot)$. Unfortunately, it is not possible to prove such a strong result as in Lemma~\ref{lemm:SchrankenUpdate}. We only know the trivial fact that:

\begin{lemma}\label{lemm:SchrankenTeilmengen}
If $X$ is a subset of $G$ such that $1\in X$ and $c\in G \setminus X$, then
\[
|Q(X)| \leq |Q(X \cup \{c\})| \leq |Q(X)| + 2|X|.\qedhere
\]
\end{lemma}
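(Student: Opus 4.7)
The plan is to deduce both bounds directly from the general expansion
\[
Q(X \cup \{c\}) = Q(X) \cup cX^{-1} \cup Xc^{-1},
\]
which is the first equality in \eqref{eq:upgrade} and — unlike the second equality there — holds for arbitrary subsets $X$, not only subgroups. Splitting the pairs $(x,y) \in (X\cup\{c\})^2$ according to which of the two coordinates equals $c$ makes this identity immediate (the corner case $x = y = c$ contributes $1$, which already lies in $Q(X)$ since $1 \in X$). The lower bound is then nothing but the inclusion $Q(X) \subseteq Q(X \cup \{c\})$, which one reads off the expansion.

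For the upper bound I would apply subadditivity of cardinality to the same expansion together with the observation that the maps $x \mapsto cx^{-1}$ and $x \mapsto xc^{-1}$ are bijections $X \to cX^{-1}$ and $X \to Xc^{-1}$ respectively, so $|cX^{-1}| = |Xc^{-1}| = |X|$. This gives $|Q(X \cup \{c\})| \leq |Q(X)| + 2|X|$ and completes the proof.

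There is no real obstacle here — as the author flags, the lemma is essentially tautological. What is worth noting is \emph{why} one cannot replicate the stronger lower bound $2|X| \leq |Q(X \cup \{c\})|$ of Lemma~\ref{lemm:SchrankenUpdate}: that argument relied on $X \cap cX = \emptyset$, which followed from $c \notin X$ \emph{because} $X$ was a subgroup. For a general subset $X$ the three pieces $Q(X)$, $cX^{-1}$ and $Xc^{-1}$ may overlap in uncontrolled ways, so neither bound can be tightened without further hypotheses, and the assumption $1 \in X$ is not actually used in the estimates themselves (it is carried along only because the ambient context is that of basic TPP triples).
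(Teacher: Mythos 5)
Your proof is correct and is exactly what the paper has in mind: the lemma is introduced as ``the trivial fact'' and carries its own end-of-proof symbol inside the statement with no argument supplied, but the reasoning is precisely yours --- the subset-valid part of \eqref{eq:upgrade}, namely $Q(X\cup\{c\}) = Q(X)\cup cX^{-1}\cup Xc^{-1}$, combined with subadditivity of cardinality. Your aside that the hypothesis $1\in X$ is not actually needed is also right, since $1\in Q(X)$ already holds for any nonempty $X$.
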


Because of the lemma above it is not possible to give an analogon to $(\ref{eq:Q}_\mathrm{g}^S)$. Again we want do indicate whether an upgrade could be successful. We define $I(S,T,U)$ as follows:
\begin{align*}
S\in I(S,T,U) \Leftrightarrow (\ref{eq:Neumann}^S)\\
T\in I(S,T,U) \Leftrightarrow (\ref{eq:Neumann}^T)\\
U\in I(S,T,U) \Leftrightarrow (\ref{eq:Neumann}^U)
\end{align*}

The new algorithm for subsets is somewhat shorter but not as strong as the one for subgroups:

\begin{algorithm}[H]
\DontPrintSemicolon
$I:=\emptyset$\;
\lIf{$(\ref{eq:Neumann}^S)$}{$I:=I\cup\{S\}$\;}
\lIf{$(\ref{eq:Neumann}^T)$}{$I:=I\cup\{T,U\}$\;}
\Return{$I$\;}
\caption{\textsc{IsSetUpgradePossible}$(S,T,U)$}
\end{algorithm}

Let us assume, that we upgrade $S$. The only thing we have to check is
\begin{align*}
1&=Q(\tilde S) \cap Q(T)Q(U)= (Q(S) \cup cS^{-1} \cup Sc^{-1})\cap Q(T)Q(U)\\
&= \underbrace{(Q(S) \cap Q(T)Q(U))}_{=1} \cup \underbrace{(cS^{-1} \cap Q(T)Q(U))}_{=:A} \cup \underbrace{(Sc^{-1} \cap Q(T)Q(U))}_{=:B}.
\end{align*}
If we assume that $A=1$, then $c\in S$, a contradiction. Therefore $A=\emptyset$ must hold. $B=\emptyset$ holds, too. Therefore the resulting TPP test is similar to \textsc{SpecialTPPTestGroup}, but in our current situation we use $P:=Q(T)Q(U)$ and take care of $X^{-1}$:

\begin{algorithm}[H]
\DontPrintSemicolon
\eIf{$cX^{-1} \cap P=\emptyset$ \textbf{\textup{and}} $Xc^{-1} \cap P=\emptyset$}{\Return{\texttt{true}}\;}{\Return{\texttt{false}}\;}
\caption{\textsc{SpecialTPPTestSet}$(X,c,P)$}
\end{algorithm}

The upgrade procedure \textsc{UpgradeStepSet} differs from \textsc{UpgradeStepGroup} in the definitions of $P$, the candidates $C(\cdot)$ and the TPP test:

\begin{algorithm}[H]
\DontPrintSemicolon
$\mathcal U:=\emptyset$;  $I:={}$\textsc{IsSetUpgradePossible}$(S,T,U)$\;
\If{$S\in I$}{
$P:=Q(T)Q(U)$\;
\ForEach{$c\in C(S)$}
{
\lIf{\textsc{SpecialTPPTestSet}$(S,c,P)$}{$\mathcal U := \mathcal U \cup \{(S\cup \{c\}, ~ T, ~ U)\}$\;}
}
}
\If{$T\in I$}{
$P:=Q(S)Q(U)$\;
\ForEach{$c\in C(T)$}
{
\lIf{\textsc{SpecialTPPTestSet}$(T,c,P)$}{$\mathcal U := \mathcal U \cup \{(S, ~ T\cup \{c\}, ~ U)\}$\;}
}
}
\If{$U\in I$}{
$P:=Q(S)Q(T)$\;
\ForEach{$c\in C(U)$}
{
\lIf{\textsc{SpecialTPPTestSet}$(U,c,P)$}{$\mathcal U := \mathcal U \cup \{(S, ~ T, ~ U\cup \{c\})\}$\;}
}
}
\Return{$\mathcal U$\;}
\caption{\textsc{UpgradeStepSet}$(S,T,U)$}
\end{algorithm}

The procedure \textsc{UpgradeStepSetBiggest} is similar to \textsc{UpgradeStepGroupBiggest}.

\section{Reducing TPP Triples as Input for the Upgrade Process}

\noindent In this section we deal with the problem, that $I(S,T,U)=\emptyset$ or $\mathcal U(S,T,U)=\emptyset$ for a given TPP triple $(S,T,U)$ of subgroups or subsets of $G$. We try to find an element $1\neq d \in S\cup T\cup U$ which we can \emph{delete} and \emph{restart} the upgrade process (for subsets). With $X_d\in \{S,T,U\}$ we denote the source of $d\in X_d$. We look at four strategies for building a TPP triple $T(d)$ from a given TPP triple $T$ by deleting~$d$:
\begin{enumerate}
 \item \textsc{RandomDelete}. Pick a random $1\neq d \in S\cup T\cup U$.
 \item \textsc{RandomWithRestrictionDelete}. Pick a random $1\neq d \in S\cup T\cup U$, such that $d\notin N$. Here $N$ denotes a given subset $N\subset S\cup T\cup U$ of elements that are \emph{not} allowed to be deleted, for example to make sure that no element of order 2 will be deleted or to systematically check all $d \in S\cup T\cup U$.
 \item \textsc{MaxTripleDelete}. Choose a $1\neq d \in S\cup T\cup U$ such that
\[|T(d')| \leq |T(d)| \qquad \forall d' \neq d.\]
If there are more than one such $d$'s, pick a random one.
\item \textsc{MaxQuotientDelete}. Choose a $1\neq d \in S\cup T\cup U$ such that
\[|Q(X_d)| - |Q(X_d\setminus\{d\})| \leq |Q(X_{d'})| - |Q(X_{d'}\setminus\{d'\})| \qquad \forall d' \neq d.\]
That means we try to maximize the reduction of $Q(S)$, $Q(T)$ or $Q(U)$. If there are more than one such $d$'s, pick a random one.
\end{enumerate}
Because we are only interested in matrix-matrix multiplication, we have to check that after the deletion of $d$, $|X_d|\geq 2$ still holds.

\section{Tests and Results}

\begin{table}
\fontsize{5}{5}\selectfont
\begin{tabular}{|l|ll|ll|l|}
\hline
Id($G$)  & $\beta_\mathrm{g}(G)$ & $\tens{n,p,m}$  & $\ell(G)$ & $\tens{\tilde n, \tilde p, \tilde m}$  & $\ell / \beta_\mathrm{g}$\\
\hline
$[ 52, 3 ]$ & 64 & $\langle 4, 4, 4 \rangle $ & 80 & $\langle 4, 4, \mathbf5 \rangle $ & 1.250 \\
$[ 78, 1 ]$ & 108 & $\langle 6, 6, 3 \rangle $ & 144 & $\langle 6, 6, \mathbf4 \rangle $ & 1.333 \\
$[ 104, 12 ]$ & 128 & $\langle 8, 4, 4 \rangle $ & 160 & $\langle 8, 4, \mathbf5 \rangle $ & 1.250 \\
$[ 156, 8 ]$ & 216 & $\langle 12, 6, 3 \rangle $ & 288 & $\langle 12, 6, \mathbf4 \rangle $ & 1.333 \\
$[ 156, 9 ]$ & 192 & $\langle 12, 4, 4 \rangle $ & 240 & $\langle 12, 4, \mathbf5 \rangle $ & 1.250 \\
$[ 156, 10 ]$ & 192 & $\langle 12, 4, 4 \rangle $ & 240 & $\langle 12, 4, \mathbf5 \rangle $ & 1.250 \\
$[ 186, 1 ]$ & 216 & $\langle 6, 6, 6 \rangle $ & 252 & $\langle 6, 6, \mathbf7 \rangle $ & 1.167 \\
$[ 192, 201 ]$ & 432 & $\langle 12, 6, 6 \rangle $ & 468 & $\langle \mathbf{13}, 6, 6 \rangle $ & 1.083 \\
$[ 196, 9 ]$ & 224 & $\langle 14, 4, 4 \rangle $ & 280 & $\langle 14, 4, \mathbf5 \rangle $ & 1.250 \\
$[ 208, 30 ]$ & 256 & $\langle 16, 4, 4 \rangle $ & 320 & $\langle 16, 4, \mathbf5 \rangle $ & 1.250 \\
$[ 208, 31 ]$ & 256 & $\langle 16, 4, 4 \rangle $ & 320 & $\langle 16, 4, \mathbf5 \rangle $ & 1.250 \\
$[ 208, 34 ]$ & 256 & $\langle 16, 4, 4 \rangle $ & 320 & $\langle 16, 4, \mathbf5 \rangle $ & 1.250 \\
$[ 208, 49 ]$ & 256 & $\langle 16, 4, 4 \rangle $ & 320 & $\langle 16, 4, \mathbf5 \rangle $ & 1.250 \\
$[ 234, 7 ]$ & 324 & $\langle 18, 6, 3 \rangle $ & 432 & $\langle 18, 6, \mathbf4 \rangle $ & 1.333 \\
$[ 234, 9 ]$ & 324 & $\langle 18, 6, 3 \rangle $ & 432 & $\langle 18, 6, \mathbf4 \rangle $ & 1.333 \\
$[ 260, 5 ]$ & 320 & $\langle 20, 4, 4 \rangle $ & 400 & $\langle 20, 4, \mathbf5 \rangle $ & 1.250 \\
$[ 260, 6 ]$ & 320 & $\langle 20, 4, 4 \rangle $ & 400 & $\langle 20, 4, \mathbf5 \rangle $ & 1.250 \\
$[ 301, 1 ]$ & 343 & $\langle 7, 7, 7 \rangle $ & 392 & $\langle 7, 7, \mathbf8 \rangle $ & 1.142 \\
$[ 310, 1 ]$ & 500 & $\langle 10, 10, 5 \rangle $ & 600 & $\langle 10, 10, \mathbf6 \rangle $ & 1.200 \\
$[ 312, 9 ]$ & 432 & $\langle 24, 6, 3 \rangle $ & 576 & $\langle 24, 6, \mathbf4 \rangle $ & 1.333 \\
$[ 312, 10 ]$ & 432 & $\langle 24, 6, 3 \rangle $ & 576 & $\langle 24, 6, \mathbf4 \rangle $ & 1.333 \\
$[ 312, 12 ]$ & 432 & $\langle 24, 6, 3 \rangle $ & 576 & $\langle 24, 6, \mathbf4 \rangle $ & 1.333 \\
$[ 312, 46 ]$ & 512 & $\langle 8, 8, 8 \rangle $ & 576 & $\langle 8, 8, \mathbf9 \rangle $ & 1.125 \\
$[ 312, 49 ]$ & 432 & $\langle 24, 6, 3 \rangle $ & 576 & $\langle 24, 6, \mathbf4 \rangle $ & 1.333 \\
$[ 312, 52 ]$ & 384 & $\langle 24, 4, 4 \rangle $ & 480 & $\langle 24, 4, \mathbf5 \rangle $ & 1.250 \\
$[ 312, 53 ]$ & 384 & $\langle 24, 4, 4 \rangle $ & 480 & $\langle 24, 4, \mathbf5 \rangle $ & 1.250 \\
$[ 328, 12 ]$ & 512 & $\langle 8, 8, 8 \rangle $ & 576 & $\langle \mathbf9, 8, 8 \rangle $ & 1.125 \\
$[ 351, 12 ]$ & 507 & $\langle 13, 13, 3 \rangle $ & 676 & $\langle 13, 13, \mathbf4 \rangle $ & 1.333 \\
$[ 364, 5 ]$ & 448 & $\langle 28, 4, 4 \rangle $ & 560 & $\langle 28, 4, \mathbf5 \rangle $ & 1.250 \\
$[ 364, 6 ]$ & 448 & $\langle 28, 4, 4 \rangle $ & 560 & $\langle 28, 4, \mathbf5 \rangle $ & 1.250 \\
$[ 372, 7 ]$ & 432 & $\langle 12, 6, 6 \rangle $ & 504 & $\langle 12, 6, \mathbf7 \rangle $ & 1.167 \\
$[ 384, 609 ]$ & 864 & $\langle 24, 6, 6 \rangle $ & 900 & $\langle \mathbf{25}, 6, 6 \rangle $ & 1.041 \\
$[ 384, 618 ]$ & 864 & $\langle 24, 6, 6 \rangle $ & 900 & $\langle \mathbf{25}, 6, 6 \rangle $ & 1.041 \\
$[ 390, 1 ]$ & 540 & $\langle 30, 6, 3 \rangle $ & 720 & $\langle 30, 6, \mathbf4 \rangle $ & 1.333 \\
$[ 390, 3 ]$ & 540 & $\langle 30, 6, 3 \rangle $ & 720 & $\langle 30, 6, \mathbf4 \rangle $ & 1.333 \\
$[ 392, 41 ]$ & 448 & $\langle 28, 4, 4 \rangle $ & 560 & $\langle 28, 4, \mathbf5 \rangle $ & 1.250 \\
$[ 400, 134 ]$ & 640 & $\langle 40, 4, 4 \rangle $ & 800 & $\langle 40, 4, \mathbf5 \rangle $ & 1.250 \\
$[ 405, 15 ]$ & 675 & $\langle 27, 5, 5 \rangle $ & 810 & $\langle 27, 5, \mathbf6 \rangle $ & 1.200 \\
$[ 410, 1 ]$ & 500 & $\langle 10, 10, 5 \rangle $ & 600 & $\langle 10, 10, \mathbf6 \rangle $ & 1.200 \\
$[ 416, 66 ]$ & 512 & $\langle 32, 4, 4 \rangle $ & 640 & $\langle 32, 4, \mathbf5 \rangle $ & 1.250 \\
$[ 416, 67 ]$ & 512 & $\langle 32, 4, 4 \rangle $ & 640 & $\langle 32, 4, \mathbf5 \rangle $ & 1.250 \\
$[ 416, 68 ]$ & 512 & $\langle 32, 4, 4 \rangle $ & 640 & $\langle 32, 4, \mathbf5 \rangle $ & 1.250 \\
$[ 416, 69 ]$ & 512 & $\langle 32, 4, 4 \rangle $ & 640 & $\langle 32, 4, \mathbf5 \rangle $ & 1.250 \\
$[ 416, 81 ]$ & 512 & $\langle 32, 4, 4 \rangle $ & 640 & $\langle 32, 4, \mathbf5 \rangle $ & 1.250 \\
$[ 416, 82 ]$ & 512 & $\langle 32, 4, 4 \rangle $ & 640 & $\langle 32, 4, \mathbf5 \rangle $ & 1.250 \\
$[ 416, 83 ]$ & 512 & $\langle 32, 4, 4 \rangle $ & 640 & $\langle 32, 4, \mathbf5 \rangle $ & 1.250 \\
$[ 416, 84 ]$ & 512 & $\langle 32, 4, 4 \rangle $ & 640 & $\langle 32, 4, \mathbf5 \rangle $ & 1.250 \\
$[ 416, 85 ]$ & 512 & $\langle 32, 4, 4 \rangle $ & 640 & $\langle 32, 4, \mathbf5 \rangle $ & 1.250 \\
$[ 416, 202 ]$ & 512 & $\langle 32, 4, 4 \rangle $ & 640 & $\langle 32, 4, \mathbf5 \rangle $ & 1.250 \\
$[ 416, 203 ]$ & 512 & $\langle 32, 4, 4 \rangle $ & 640 & $\langle 32, 4, \mathbf5 \rangle $ & 1.250 \\
$[ 416, 204 ]$ & 512 & $\langle 32, 4, 4 \rangle $ & 640 & $\langle 32, 4, \mathbf5 \rangle $ & 1.250 \\
$[ 416, 206 ]$ & 512 & $\langle 32, 4, 4 \rangle $ & 640 & $\langle 32, 4, \mathbf5 \rangle $ & 1.250 \\
$[ 416, 208 ]$ & 512 & $\langle 32, 4, 4 \rangle $ & 640 & $\langle 32, 4, \mathbf5 \rangle $ & 1.250 \\
$[ 416, 211 ]$ & 512 & $\langle 32, 4, 4 \rangle $ & 640 & $\langle 32, 4, \mathbf5 \rangle $ & 1.250 \\
$[ 416, 233 ]$ & 512 & $\langle 32, 4, 4 \rangle $ & 640 & $\langle 32, 4, \mathbf5 \rangle $ & 1.250 \\
$[ 444, 7 ]$ & 864 & $\langle 12, 12, 6 \rangle $ & 936 & $\langle \mathbf{13}, 12, 6 \rangle $ & 1.083 \\
$[ 468, 7 ]$ & 576 & $\langle 36, 4, 4 \rangle $ & 720 & $\langle 36, 4, \mathbf5 \rangle $ & 1.250 \\
$[ 468, 9 ]$ & 576 & $\langle 36, 4, 4 \rangle $ & 720 & $\langle 36, 4, \mathbf5 \rangle $ & 1.250 \\
$[ 468, 10 ]$ & 576 & $\langle 36, 4, 4 \rangle $ & 720 & $\langle 36, 4, \mathbf5 \rangle $ & 1.250 \\
$[ 468, 31 ]$ & 864 & $\langle 12, 12, 6 \rangle $ & 936 & $\langle \mathbf{13}, 12, 6 \rangle $ & 1.083 \\
$[ 468, 33 ]$ & 648 & $\langle 36, 6, 3 \rangle $ & 864 & $\langle 36, 6, \mathbf4 \rangle $ & 1.333 \\
$[ 468, 35 ]$ & 648 & $\langle 36, 6, 3 \rangle $ & 864 & $\langle 36, 6, \mathbf4 \rangle $ & 1.333 \\
$[ 468, 36 ]$ & 576 & $\langle 36, 4, 4 \rangle $ & 720 & $\langle 36, 4, \mathbf5 \rangle $ & 1.250 \\
$[ 468, 37 ]$ & 576 & $\langle 36, 4, 4 \rangle $ & 720 & $\langle 36, 4, \mathbf5 \rangle $ & 1.250 \\
$[ 468, 38 ]$ & 576 & $\langle 36, 4, 4 \rangle $ & 720 & $\langle 36, 4, \mathbf5 \rangle $ & 1.250 \\
$[ 520, 38 ]$ & 640 & $\langle 40, 4, 4 \rangle $ & 800 & $\langle 40, 4, \mathbf5 \rangle $ & 1.250 \\
$[ 520, 39 ]$ & 640 & $\langle 40, 4, 4 \rangle $ & 800 & $\langle 40, 4, \mathbf5 \rangle $ & 1.250 \\
$[ 520, 44 ]$ & 640 & $\langle 40, 4, 4 \rangle $ & 800 & $\langle 40, 4, \mathbf5 \rangle $ & 1.250 \\
$[ 546, 1 ]$ & 756 & $\langle 42, 6, 3 \rangle $ & 1008 & $\langle 42, 6, \mathbf4 \rangle $ & 1.333 \\
$[ 546, 3 ]$ & 756 & $\langle 42, 6, 3 \rangle $ & 1008 & $\langle 42, 6, \mathbf4 \rangle $ & 1.333 \\
$[ 546, 5 ]$ & 756 & $\langle 42, 6, 3 \rangle $ & 1008 & $\langle 42, 6, \mathbf4 \rangle $ & 1.333 \\
$[ 546, 6 ]$ & 756 & $\langle 42, 6, 3 \rangle $ & 1008 & $\langle 42, 6, \mathbf4 \rangle $ & 1.333 \\
$[ 558, 7 ]$ & 648 & $\langle 18, 6, 6 \rangle $ & 756 & $\langle 18, 6, \mathbf7 \rangle $ & 1.167 \\
$[ 572, 5 ]$ & 704 & $\langle 44, 4, 4 \rangle $ & 880 & $\langle 44, 4, \mathbf5 \rangle $ & 1.250 \\
$[ 572, 6 ]$ & 704 & $\langle 44, 4, 4 \rangle $ & 880 & $\langle 44, 4, \mathbf5 \rangle $ & 1.250 \\
$[ 588, 47 ]$ & 672 & $\langle 42, 4, 4 \rangle $ & 840 & $\langle 42, \mathbf5, 4 \rangle $ & 1.250 \\
$[ 588, 48 ]$ & 672 & $\langle 42, 4, 4 \rangle $ & 840 & $\langle 42, 4, \mathbf5 \rangle $ & 1.250 \\
$[ 588, 49 ]$ & 672 & $\langle 42, 4, 4 \rangle $ & 840 & $\langle 42, 4, \mathbf5 \rangle $ & 1.250 \\
$[ 600, 162 ]$ & 1280 & $\langle 20, 8, 8 \rangle $ & 1344 & $\langle \mathbf{21}, 8, 8 \rangle $ & 1.050 \\
$[ 602, 1 ]$ & 1372 & $\langle 14, 14, 7 \rangle $ & 1568 & $\langle 14, 14, \mathbf8 \rangle $ & 1.142 \\
$[ 602, 2 ]$ & 686 & $\langle 14, 7, 7 \rangle $ & 784 & $\langle 14, 7, \mathbf8 \rangle $ & 1.142 \\
$[ 610, 1 ]$ & 1000 & $\langle 10, 10, 10 \rangle $ & 1100 & $\langle \mathbf{11}, 10, 10 \rangle $ & 1.100 \\
$[ 620, 7 ]$ & 1000 & $\langle 20, 10, 5 \rangle $ & 1200 & $\langle 20, 10, \mathbf6 \rangle $ & 1.200 \\
$[ 624, 8 ]$ & 864 & $\langle 48, 6, 3 \rangle $ & 1152 & $\langle 48, 6, \mathbf4 \rangle $ & 1.333 \\
$[ 624, 9 ]$ & 864 & $\langle 48, 6, 3 \rangle $ & 1152 & $\langle 48, 6, \mathbf4 \rangle $ & 1.333 \\
$[ 624, 10 ]$ & 864 & $\langle 48, 6, 3 \rangle $ & 1152 & $\langle 48, 6, \mathbf4 \rangle $ & 1.333 \\
$[ 624, 11 ]$ & 864 & $\langle 48, 6, 3 \rangle $ & 1152 & $\langle 48, 6, \mathbf4 \rangle $ & 1.333 \\
$[ 624, 18 ]$ & 864 & $\langle 48, 6, 3 \rangle $ & 1152 & $\langle 48, 6, \mathbf4 \rangle $ & 1.333 \\
$[ 624, 19 ]$ & 864 & $\langle 48, 6, 3 \rangle $ & 1152 & $\langle 48, 6, \mathbf4 \rangle $ & 1.333 \\
$[ 624, 21 ]$ & 864 & $\langle 48, 6, 3 \rangle $ & 1152 & $\langle 48, 6, \mathbf4 \rangle $ & 1.333 \\
$[ 624, 23 ]$ & 864 & $\langle 12, 12, 6 \rangle $ & 1008 & $\langle 12, 12, \mathbf7 \rangle $ & 1.167 \\
$[ 624, 117 ]$ & 864 & $\langle 48, 6, 3 \rangle $ & 1152 & $\langle 48, 6, \mathbf4 \rangle $ & 1.333 \\
$[ 624, 118 ]$ & 864 & $\langle 48, 6, 3 \rangle $ & 1152 & $\langle 48, 6, \mathbf4 \rangle $ & 1.333 \\
$[ 624, 124 ]$ & 768 & $\langle 48, 4, 4 \rangle $ & 960 & $\langle 48, 4, \mathbf5 \rangle $ & 1.250 \\
$[ 624, 126 ]$ & 768 & $\langle 48, 4, 4 \rangle $ & 960 & $\langle 48, 4, \mathbf5 \rangle $ & 1.250 \\
$[ 624, 138 ]$ & 864 & $\langle 48, 6, 3 \rangle $ & 1152 & $\langle 48, 6, \mathbf4 \rangle $ & 1.333 \\
$[ 624, 139 ]$ & 864 & $\langle 48, 6, 3 \rangle $ & 1152 & $\langle 48, 6, \mathbf4 \rangle $ & 1.333 \\
$[ 624, 140 ]$ & 864 & $\langle 48, 6, 3 \rangle $ & 1152 & $\langle 48, 6, \mathbf4 \rangle $ & 1.333 \\
$[ 624, 141 ]$ & 864 & $\langle 48, 6, 3 \rangle $ & 1152 & $\langle 48, 6, \mathbf4 \rangle $ & 1.333 \\
$[ 624, 142 ]$ & 864 & $\langle 48, 6, 3 \rangle $ & 1152 & $\langle 48, 6, \mathbf4 \rangle $ & 1.333 \\
$[ 624, 143 ]$ & 864 & $\langle 48, 6, 3 \rangle $ & 1152 & $\langle 48, 6, \mathbf4 \rangle $ & 1.333 \\
$[ 624, 144 ]$ & 864 & $\langle 48, 6, 3 \rangle $ & 1152 & $\langle 48, 6, \mathbf4 \rangle $ & 1.333 \\
$[ 624, 146 ]$ & 864 & $\langle 48, 6, 3 \rangle $ & 1152 & $\langle 48, 6, \mathbf4 \rangle $ & 1.333 \\
$[ 624, 155 ]$ & 768 & $\langle 48, 4, 4 \rangle $ & 960 & $\langle 48, 4, \mathbf5 \rangle $ & 1.250 \\
$[ 624, 156 ]$ & 768 & $\langle 48, 4, 4 \rangle $ & 960 & $\langle 48, 4, \mathbf5 \rangle $ & 1.250 \\
$[ 624, 159 ]$ & 768 & $\langle 48, 4, 4 \rangle $ & 960 & $\langle 48, 4, \mathbf5 \rangle $ & 1.250 \\
$[ 624, 162 ]$ & 768 & $\langle 48, 4, 4 \rangle $ & 960 & $\langle 48, 4, \mathbf5 \rangle $ & 1.250 \\
$[ 624, 163 ]$ & 768 & $\langle 48, 4, 4 \rangle $ & 960 & $\langle 48, 4, \mathbf5 \rangle $ & 1.250 \\
$[ 624, 166 ]$ & 768 & $\langle 48, 4, 4 \rangle $ & 960 & $\langle 48, 4, \mathbf5 \rangle $ & 1.250 \\
$[ 624, 238 ]$ & 1152 & $\langle 12, 12, 8 \rangle $ & 1296 & $\langle 12, 12, \mathbf9 \rangle $ & 1.125 \\\hline
\end{tabular}
\begin{tabular}{|l|ll|ll|l|}
\hline
Id($G$)  & $\beta_\mathrm{g}(G)$ & $\tens{n,p,m}$  & $\ell(G)$ & $\tens{\tilde n, \tilde p, \tilde m}$  & $\ell / \beta_\mathrm{g}$\\
\hline
$[ 624, 240 ]$ & 1152 & $\langle 12, 12, 8 \rangle $ & 1296 & $\langle 12, 12, \mathbf9 \rangle $ & 1.125 \\
$[ 624, 243 ]$ & 1024 & $\langle 16, 8, 8 \rangle $ & 1152 & $\langle 16, 8, \mathbf9 \rangle $ & 1.125 \\
$[ 624, 246 ]$ & 864 & $\langle 48, 6, 3 \rangle $ & 1152 & $\langle 48, 6, \mathbf4 \rangle $ & 1.333 \\
$[ 624, 249 ]$ & 768 & $\langle 48, 4, 4 \rangle $ & 960 & $\langle 48, 4, \mathbf5 \rangle $ & 1.250 \\
$[ 624, 250 ]$ & 768 & $\langle 48, 4, 4 \rangle $ & 960 & $\langle 48, 4, \mathbf5 \rangle $ & 1.250 \\
$[ 656, 50 ]$ & 1024 & $\langle 16, 8, 8 \rangle $ & 1088 & $\langle \mathbf{17}, 8, 8 \rangle $ & 1.062 \\
$[ 676, 3 ]$ & 832 & $\langle 52, 4, 4 \rangle $ & 1040 & $\langle 52, 4, \mathbf5 \rangle $ & 1.250 \\
$[ 676, 9 ]$ & 832 & $\langle 52, 4, 4 \rangle $ & 1040 & $\langle 52, 4, \mathbf5 \rangle $ & 1.250 \\
$[ 676, 10 ]$ & 832 & $\langle 52, 4, 4 \rangle $ & 1040 & $\langle 52, 4, \mathbf5 \rangle $ & 1.250 \\
$[ 676, 11 ]$ & 832 & $\langle 52, 4, 4 \rangle $ & 1040 & $\langle 52, 4, \mathbf5 \rangle $ & 1.250 \\
$[ 676, 12 ]$ & 832 & $\langle 52, 4, 4 \rangle $ & 1040 & $\langle 52, 4, \mathbf5 \rangle $ & 1.250 \\
$[ 684, 16 ]$ & 972 & $\langle 36, 9, 3 \rangle $ & 1296 & $\langle 36, 9, \mathbf4 \rangle $ & 1.333 \\
$[ 686, 5 ]$ & 1372 & $\langle 14, 14, 7 \rangle $ & 1568 & $\langle 14, 14, \mathbf8 \rangle $ & 1.142 \\
$[ 702, 7 ]$ & 972 & $\langle 54, 6, 3 \rangle $ & 1296 & $\langle 54, 6, \mathbf4 \rangle $ & 1.333 \\
$[ 702, 8 ]$ & 972 & $\langle 54, 6, 3 \rangle $ & 1296 & $\langle 54, 6, \mathbf4 \rangle $ & 1.333 \\
$[ 702, 9 ]$ & 972 & $\langle 54, 6, 3 \rangle $ & 1296 & $\langle 54, 6, \mathbf4 \rangle $ & 1.333 \\
$[ 702, 12 ]$ & 972 & $\langle 54, 6, 3 \rangle $ & 1296 & $\langle 54, 6, \mathbf4 \rangle $ & 1.333 \\
$[ 702, 18 ]$ & 972 & $\langle 54, 6, 3 \rangle $ & 1296 & $\langle 54, 6, \mathbf4 \rangle $ & 1.333 \\
$[ 702, 19 ]$ & 972 & $\langle 54, 6, 3 \rangle $ & 1296 & $\langle 54, 6, \mathbf4 \rangle $ & 1.333 \\
$[ 702, 20 ]$ & 972 & $\langle 54, 6, 3 \rangle $ & 1296 & $\langle 54, 6, \mathbf4 \rangle $ & 1.333 \\
$[ 702, 48 ]$ & 1014 & $\langle 26, 13, 3 \rangle $ & 1352 & $\langle 26, 13, \mathbf4 \rangle $ & 1.333 \\
$[ 702, 49 ]$ & 972 & $\langle 54, 6, 3 \rangle $ & 1296 & $\langle 54, 6, \mathbf4 \rangle $ & 1.333 \\
$[ 702, 51 ]$ & 972 & $\langle 54, 6, 3 \rangle $ & 1296 & $\langle 54, 6, \mathbf4 \rangle $ & 1.333 \\
$[ 702, 53 ]$ & 972 & $\langle 54, 6, 3 \rangle $ & 1296 & $\langle 54, 6, \mathbf4 \rangle $ & 1.333 \\
$[ 710, 1 ]$ & 1000 & $\langle 10, 10, 10 \rangle $ & 1100 & $\langle 10, 10, \mathbf{11} \rangle $ & 1.100 \\
$[ 726, 6 ]$ & 792 & $\langle 22, 6, 6 \rangle $ & 924 & $\langle 22, 6, \mathbf7 \rangle $ & 1.167 \\
$[ 728, 32 ]$ & 896 & $\langle 56, 4, 4 \rangle $ & 1120 & $\langle 56, 4, \mathbf5 \rangle $ & 1.250 \\
$[ 728, 34 ]$ & 896 & $\langle 56, 4, 4 \rangle $ & 1120 & $\langle 56, 4, \mathbf5 \rangle $ & 1.250 \\
$[ 728, 35 ]$ & 896 & $\langle 56, 4, 4 \rangle $ & 1120 & $\langle 56, 4, \mathbf5 \rangle $ & 1.250 \\
$[ 732, 7 ]$ & 864 & $\langle 12, 12, 6 \rangle $ & 1008 & $\langle 12, 12, \mathbf7 \rangle $ & 1.167 \\
$[ 744, 8 ]$ & 864 & $\langle 24, 6, 6 \rangle $ & 1008 & $\langle 24, 6, \mathbf7 \rangle $ & 1.167 \\
$[ 744, 9 ]$ & 864 & $\langle 24, 6, 6 \rangle $ & 1008 & $\langle 24, 6, \mathbf7 \rangle $ & 1.167 \\
$[ 744, 11 ]$ & 864 & $\langle 24, 6, 6 \rangle $ & 1008 & $\langle 24, 6, \mathbf7 \rangle $ & 1.167 \\
$[ 744, 44 ]$ & 864 & $\langle 24, 6, 6 \rangle $ & 1008 & $\langle 24, 6, \mathbf7 \rangle $ & 1.167 \\
$[ 750, 28 ]$ & 900 & $\langle 25, 6, 6 \rangle $ & 1050 & $\langle 25, 6, \mathbf7 \rangle $ & 1.167 \\
$[ 780, 20 ]$ & 1080 & $\langle 60, 6, 3 \rangle $ & 1440 & $\langle 60, 6, \mathbf4 \rangle $ & 1.333 \\
$[ 780, 21 ]$ & 1080 & $\langle 60, 6, 3 \rangle $ & 1440 & $\langle 60, 6, \mathbf4 \rangle $ & 1.333 \\
$[ 780, 23 ]$ & 1080 & $\langle 60, 6, 3 \rangle $ & 1440 & $\langle 60, 6, \mathbf4 \rangle $ & 1.333 \\
$[ 780, 24 ]$ & 960 & $\langle 60, 4, 4 \rangle $ & 1200 & $\langle 60, 4, \mathbf5 \rangle $ & 1.250 \\
$[ 780, 29 ]$ & 960 & $\langle 60, 4, 4 \rangle $ & 1200 & $\langle 60, 4, \mathbf5 \rangle $ & 1.250 \\
$[ 780, 30 ]$ & 960 & $\langle 60, 4, 4 \rangle $ & 1200 & $\langle 60, 4, \mathbf5 \rangle $ & 1.250 \\
$[ 780, 35 ]$ & 960 & $\langle 60, 4, 4 \rangle $ & 1200 & $\langle 60, 4, \mathbf5 \rangle $ & 1.250 \\
$[ 784, 105 ]$ & 896 & $\langle 56, 4, 4 \rangle $ & 1120 & $\langle 56, 4, \mathbf5 \rangle $ & 1.250 \\
$[ 784, 106 ]$ & 896 & $\langle 56, 4, 4 \rangle $ & 1120 & $\langle 56, 4, \mathbf5 \rangle $ & 1.250 \\
$[ 784, 110 ]$ & 896 & $\langle 16, 14, 4 \rangle $ & 1120 & $\langle 16, 14, \mathbf5 \rangle $ & 1.250 \\
$[ 784, 113 ]$ & 896 & $\langle 56, 4, 4 \rangle $ & 1120 & $\langle 56, 4, \mathbf5 \rangle $ & 1.250 \\
$[ 784, 114 ]$ & 896 & $\langle 56, 4, 4 \rangle $ & 1120 & $\langle 56, 4, \mathbf5 \rangle $ & 1.250 \\
$[ 784, 117 ]$ & 896 & $\langle 56, 4, 4 \rangle $ & 1120 & $\langle 56, 4, \mathbf5 \rangle $ & 1.250 \\
$[ 784, 124 ]$ & 896 & $\langle 56, 4, 4 \rangle $ & 1120 & $\langle 56, 4, \mathbf5 \rangle $ & 1.250 \\
$[ 784, 125 ]$ & 896 & $\langle 56, 4, 4 \rangle $ & 1120 & $\langle 56, 4, \mathbf5 \rangle $ & 1.250 \\
$[ 784, 126 ]$ & 896 & $\langle 56, 4, 4 \rangle $ & 1120 & $\langle 56, 4, \mathbf5 \rangle $ & 1.250 \\
$[ 820, 8 ]$ & 1000 & $\langle 20, 10, 5 \rangle $ & 1200 & $\langle 20, 10, \mathbf6 \rangle $ & 1.200 \\
$[ 832, 181 ]$ & 1024 & $\langle 64, 4, 4 \rangle $ & 1280 & $\langle 64, 4, \mathbf5 \rangle $ & 1.250 \\
$[ 832, 182 ]$ & 1024 & $\langle 64, 4, 4 \rangle $ & 1280 & $\langle 64, 4, \mathbf5 \rangle $ & 1.250 \\
$[ 832, 183 ]$ & 1024 & $\langle 64, 4, 4 \rangle $ & 1280 & $\langle 64, 4, \mathbf5 \rangle $ & 1.250 \\
$[ 832, 184 ]$ & 1024 & $\langle 64, 4, 4 \rangle $ & 1280 & $\langle 64, 4, \mathbf5 \rangle $ & 1.250 \\
$[ 832, 185 ]$ & 1024 & $\langle 64, 4, 4 \rangle $ & 1280 & $\langle 64, 4, \mathbf5 \rangle $ & 1.250 \\
$[ 832, 186 ]$ & 1024 & $\langle 64, 4, 4 \rangle $ & 1280 & $\langle 64, 4, \mathbf5 \rangle $ & 1.250 \\
$[ 832, 187 ]$ & 1024 & $\langle 64, 4, 4 \rangle $ & 1280 & $\langle 64, 4, \mathbf5 \rangle $ & 1.250 \\
$[ 832, 188 ]$ & 1024 & $\langle 64, 4, 4 \rangle $ & 1280 & $\langle 64, 4, \mathbf5 \rangle $ & 1.250 \\
$[ 832, 200 ]$ & 1024 & $\langle 64, 4, 4 \rangle $ & 1280 & $\langle 64, 4, \mathbf5 \rangle $ & 1.250 \\
$[ 832, 201 ]$ & 1024 & $\langle 64, 4, 4 \rangle $ & 1280 & $\langle 64, 4, \mathbf5 \rangle $ & 1.250 \\
$[ 832, 203 ]$ & 1024 & $\langle 64, 4, 4 \rangle $ & 1280 & $\langle 64, 4, \mathbf5 \rangle $ & 1.250 \\
$[ 832, 206 ]$ & 1024 & $\langle 64, 4, 4 \rangle $ & 1280 & $\langle 64, 4, \mathbf5 \rangle $ & 1.250 \\
$[ 832, 207 ]$ & 1024 & $\langle 64, 4, 4 \rangle $ & 1280 & $\langle 64, 4, \mathbf5 \rangle $ & 1.250 \\
$[ 832, 212 ]$ & 1024 & $\langle 64, 4, 4 \rangle $ & 1280 & $\langle 64, 4, \mathbf5 \rangle $ & 1.250 \\
$[ 832, 213 ]$ & 1024 & $\langle 64, 4, 4 \rangle $ & 1280 & $\langle 64, 4, \mathbf5 \rangle $ & 1.250 \\
$[ 832, 230 ]$ & 1024 & $\langle 64, 4, 4 \rangle $ & 1280 & $\langle 64, 4, \mathbf5 \rangle $ & 1.250 \\
$[ 832, 231 ]$ & 1024 & $\langle 64, 4, 4 \rangle $ & 1280 & $\langle 64, 4, \mathbf5 \rangle $ & 1.250 \\
$[ 832, 232 ]$ & 1024 & $\langle 64, 4, 4 \rangle $ & 1280 & $\langle 64, 4, \mathbf5 \rangle $ & 1.250 \\
$[ 832, 233 ]$ & 1024 & $\langle 64, 4, 4 \rangle $ & 1280 & $\langle 64, 4, \mathbf5 \rangle $ & 1.250 \\
$[ 832, 237 ]$ & 1024 & $\langle 64, 4, 4 \rangle $ & 1280 & $\langle 64, 4, \mathbf5 \rangle $ & 1.250 \\
$[ 832, 238 ]$ & 1024 & $\langle 64, 4, 4 \rangle $ & 1280 & $\langle 64, 4, \mathbf5 \rangle $ & 1.250 \\
$[ 832, 240 ]$ & 1024 & $\langle 64, 4, 4 \rangle $ & 1280 & $\langle 64, 4, \mathbf5 \rangle $ & 1.250 \\
$[ 832, 241 ]$ & 1024 & $\langle 64, 4, 4 \rangle $ & 1280 & $\langle 64, 4, \mathbf5 \rangle $ & 1.250 \\
$[ 832, 242 ]$ & 1024 & $\langle 64, 4, 4 \rangle $ & 1280 & $\langle 64, 4, \mathbf5 \rangle $ & 1.250 \\
$[ 832, 245 ]$ & 1024 & $\langle 64, 4, 4 \rangle $ & 1280 & $\langle 64, 4, \mathbf5 \rangle $ & 1.250 \\
$[ 832, 246 ]$ & 1024 & $\langle 64, 4, 4 \rangle $ & 1280 & $\langle 64, 4, \mathbf5 \rangle $ & 1.250 \\
$[ 832, 254 ]$ & 1024 & $\langle 64, 4, 4 \rangle $ & 1280 & $\langle 64, 4, \mathbf5 \rangle $ & 1.250 \\
$[ 832, 258 ]$ & 1024 & $\langle 64, 4, 4 \rangle $ & 1280 & $\langle 64, 4, \mathbf5 \rangle $ & 1.250 \\
$[ 832, 264 ]$ & 1024 & $\langle 64, 4, 4 \rangle $ & 1280 & $\langle 64, 4, \mathbf5 \rangle $ & 1.250 \\
$[ 858, 1 ]$ & 1188 & $\langle 66, 6, 3 \rangle $ & 1584 & $\langle 66, 6, \mathbf4 \rangle $ & 1.333 \\
$[ 858, 3 ]$ & 1188 & $\langle 66, 6, 3 \rangle $ & 1584 & $\langle 66, 6, \mathbf4 \rangle $ & 1.333 \\
$[ 876, 7 ]$ & 1728 & $\langle 12, 12, 12 \rangle $ & 1872 & $\langle 12, \mathbf{13}, 12 \rangle $ & 1.083 \\
$[ 884, 6 ]$ & 1088 & $\langle 68, 4, 4 \rangle $ & 1360 & $\langle 68, 4, \mathbf5 \rangle $ & 1.250 \\
$[ 884, 7 ]$ & 1088 & $\langle 68, 4, 4 \rangle $ & 1360 & $\langle 68, 4, \mathbf5 \rangle $ & 1.250 \\
$[ 884, 8 ]$ & 1088 & $\langle 68, 4, 4 \rangle $ & 1360 & $\langle 68, 4, \mathbf5 \rangle $ & 1.250 \\
$[ 884, 9 ]$ & 1088 & $\langle 68, 4, 4 \rangle $ & 1360 & $\langle 68, 4, \mathbf5 \rangle $ & 1.250 \\
$[ 888, 45 ]$ & 1728 & $\langle 24, 12, 6 \rangle $ & 1800 & $\langle \mathbf{25}, 12, 6 \rangle $ & 1.041 \\
$[ 903, 1 ]$ & 1323 & $\langle 21, 21, 3 \rangle $ & 1764 & $\langle 21, 21, \mathbf4 \rangle $ & 1.333 \\
$[ 903, 2 ]$ & 1029 & $\langle 21, 7, 7 \rangle $ & 1176 & $\langle 21, 7, \mathbf8 \rangle $ & 1.142 \\
$[ 915, 1 ]$ & 1125 & $\langle 15, 15, 5 \rangle $ & 1350 & $\langle 15, 15, \mathbf6 \rangle $ & 1.200 \\
$[ 930, 3 ]$ & 1500 & $\langle 30, 10, 5 \rangle $ & 1800 & $\langle 30, 10, \mathbf6 \rangle $ & 1.200 \\
$[ 930, 5 ]$ & 1500 & $\langle 30, 10, 5 \rangle $ & 1800 & $\langle 30, 10, \mathbf6 \rangle $ & 1.200 \\
$[ 930, 6 ]$ & 1080 & $\langle 30, 6, 6 \rangle $ & 1260 & $\langle 30, 6, \mathbf7 \rangle $ & 1.167 \\
$[ 930, 8 ]$ & 1080 & $\langle 30, 6, 6 \rangle $ & 1260 & $\langle 30, 6, \mathbf7 \rangle $ & 1.167 \\
$[ 968, 35 ]$ & 1408 & $\langle 22, 8, 8 \rangle $ & 1584 & $\langle 22, \mathbf9, 8 \rangle $ & 1.125 \\
$[ 968, 36 ]$ & 1408 & $\langle 22, 8, 8 \rangle $ & 1584 & $\langle 22, 8, \mathbf9 \rangle $ & 1.125 \\
$[ 968, 37 ]$ & 1408 & $\langle 22, 8, 8 \rangle $ & 1584 & $\langle 22, 8, \mathbf9 \rangle $ & 1.125 \\
$[ 979, 1 ]$ & 1331 & $\langle 11, 11, 11 \rangle $ & 1452 & $\langle 11, 11, \mathbf{12} \rangle $ & 1.090 \\
$[ 980, 18 ]$ & 1120 & $\langle 70, 4, 4 \rangle $ & 1400 & $\langle 70, \mathbf5, 4 \rangle $ & 1.250 \\
$[ 980, 23 ]$ & 1120 & $\langle 70, 4, 4 \rangle $ & 1400 & $\langle 70, 4, \mathbf5 \rangle $ & 1.250 \\
$[ 980, 24 ]$ & 1120 & $\langle 70, 4, 4 \rangle $ & 1400 & $\langle 70, 4, \mathbf5 \rangle $ & 1.250 \\
$[ 980, 27 ]$ & 1120 & $\langle 70, 4, 4 \rangle $ & 1400 & $\langle 70, 4, \mathbf5 \rangle $ & 1.250 \\
$[ 980, 28 ]$ & 1120 & $\langle 70, 4, 4 \rangle $ & 1400 & $\langle 70, 4, \mathbf5 \rangle $ & 1.250 \\
$[ 984, 33 ]$ & 1536 & $\langle 24, 8, 8 \rangle $ & 1600 & $\langle \mathbf{25}, 8, 8 \rangle $ & 1.041 \\
$[ 984, 34 ]$ & 1536 & $\langle 24, 8, 8 \rangle $ & 1600 & $\langle \mathbf{25}, 8, 8 \rangle $ & 1.041 \\
$[ 988, 5 ]$ & 1216 & $\langle 76, 4, 4 \rangle $ & 1520 & $\langle 76, 4, \mathbf5 \rangle $ & 1.250 \\
$[ 988, 6 ]$ & 1216 & $\langle 76, 4, 4 \rangle $ & 1520 & $\langle 76, 4, \mathbf5 \rangle $ & 1.250 \\
$[ 994, 1 ]$ & 1372 & $\langle 14, 14, 7 \rangle $ & 1568 & $\langle 14, 14, \mathbf8 \rangle $ & 1.142 \\\hline
\end{tabular}
\normalsize
\caption{Results of the upgrade process for subgroups with input from \cite{HedtkeMurthy2011}.}
\label{tab:BiggGrp}
\end{table}

\noindent We start with the results from the upgrade process for subgroups. We use the results of the brute-force search from \cite{HedtkeMurthy2011}, located at \texttt{http://www2.informatik.uni-halle.de/da/hedtke/tpp/}. This means we try to upgrade more than 48000 maximal subgroup TPP triples for groups of order up to 1000. The results of \textsc{UpgradeStepGroupBiggest} are can be found in Table~\ref{tab:BiggGrp}. The table shows the IdSmallGroup of the SmallGroups Library of \textsf{GAP}\nocite{GAP4.4.12}, the TPP subgroup capacity $\beta_\mathrm{g}$, the realized problem $\langle n, p, m\rangle$ by subgroups, the new lower bound $\ell$ for $\beta$ found by the upgrade process, the new realized problem $\langle \tilde n, \tilde p, \tilde m\rangle$ and the quality of the new result measured by $\ell / \beta_\mathrm{g}$. There are circa $2000$ input TPP triples for the upgrade process (all nontrivial TPP subgroup triples in the tables of \cite{HedtkeMurthy2011}). The upgrade process for subgroups results in new TPP triples for exactly $220$ of these input triples. The upgraded part of the triple is printed in boldface in the table. In the best cases we achieve $\ell / \beta_\mathrm{g} = 4/3$, which means that we were able to increase the lower bound for $\beta$ by $33$ \%. None of the upgraded triples is of a size bigger than $D_3$.

Our second test is a combination of one upgrade step for subgroup triples and as many upgrade steps for subset triples as possible. In the first step we compute \emph{all} upgrades of subgroup triples for the groups listed in Table~\ref{tab:BiggGrp}. The output is a list of subset TPP triples. Now we compute \emph{all} upgrades of these subset triples. And after this we compute all upgrades of the upgraded triples, and so on.
The results of this process are shown in Table~\ref{tab:BiggSet}. The new lower bound $\ell'$ is the result of the iterative upgrade process.
If the iterative process achieves a better bound, the new value is printed in boldface.
In the best cases we achieve $\ell' / \beta_\mathrm{g} = 2$, which means that we were able to increase the lower bound for $\beta$ by $100$ \%. None of the upgraded triples is of a size bigger than $D_3$.
The memory requirements of this process are very high. One star (*) in the table indicates, that the iterative process abborted while using 7 GB of RAM. Two stars (**) indicates, that the process abborted with 31 GB of RAM. 

Finally we look at the case that no upgrade is possible (all nontrivial subgroup TPP triples from \cite{HedtkeMurthy2011} except the groups from Table~\ref{tab:BiggGrp}). We reduce the given maximal triple with one of the methods described in the previous section and start two upgrade steps for subset triples (because only one upgrade step after one reduction step makes no sense). Because the reduction methods contain a random part we repeat the test three times. The results are shown in Table~\ref{tab:Red}. Here $\ell_\mathrm{X}$ denotes the lower bound achieved after one reduction step with method X and two upgrade steps. In the best cases we achieve $\ell_\mathrm{X} / \beta_\mathrm{g} = 4/3$, which means that we were able to increase the lower bound for $\beta$ by $33$ \%. None of the upgraded triples is of a size bigger than $D_3$. If this process leads to a new lower bound the value is printed in boldface. We reduction methods are bases on different heuristics. Therefore we
also counted the successful upgrades after one step with the different reduction methods. With \textsc{MaxTripleDelete} we achieve the most sucessful upgrades (43 times), followed by the \textsc{RandomDelete} method (23 times) and \textsc{MaxQuotientDelete} (only 9 times). Thus we recommend the reduction method \textsc{MaxTripleDelete}.

~

The tests above tell us, that the presented upgrade and reduction methods are well suited to enlarge the output of the bruteforce-search for subgroups triples. Some researchers believe that triples of subgroups will never lead to a new nontrivial upper bound for $\omega$ in the context of the (TPP). But now we can use the fast end efficient search methods for subgroup triples and upgrade the output with our new methods.

The next steps in the context of upgrading und reducing TPP triples could be efficient implementations of the iterative upgrading process (as mentioned above, the memory requirements are enormous) and a good strategy of how to combine the (one or more) reduction steps and iterative upgrading process.

\begin{table}
\fontsize{5}{6}\selectfont
\begin{tabular}{|lr|rrr|l|}
\hline
Id($G$)  & $D_3(G)$ &  $\beta_\mathrm{g}(G)$ & $\ell(G)$ & $\ell'(G)$ & $\ell'/\beta_\mathrm{g}$\\
\hline
$[ 104, 12 ]$ & 	392 & 	128 & 	160 & 	160 		& 1.250\\
$[ 156, 9 ]$ & 		588 & 	192 & 	240 & 	240 		& 1.250\\
$[ 156, 10 ]$ & 	596 & 	192 & 	240 & 	240 		& 1.250\\
$[ 186, 1 ]$ & 		1086 &  216 & 	252 & 	\textbf{432} 	& 2.000\\
$[ 192, 201 ]$ & 	876 & 	432 & 	468 & 	\textbf{504} 	& 1.167\\
$[ 196, 9 ]$ & 		676 & 	224 & 	280 & 	\textbf{336}	& 1.500\\
$[ 208, 30 ]$ & 	784 & 	256 & 	320 & 	320 		& 1.250\\
$[ 208, 31 ]$ & 	792 &	256 & 	320 & 	320 		& 1.250\\
$[ 208, 34 ]$ & 	792 & 	256 & 	320 & 	320 		& 1.250\\
$[ 208, 49 ]$ & 	784	& 	256 & 	320 & 	320 	& 1.250\\
$[ 234, 7 ]$ & 		1314 &	324 & 	432 & 	432 		& 1.333\\
$[ 234, 9 ]$ & 		1326 &	324 & 	432 & 	432 		& 1.333\\
$[ 260, 5 ]$ & 		980 & 	320 & 	400 & 	400		& 1.250\\
$[ 260, 6 ]$ & 		996 &	320 & 	400 &  	400		& 1.250\\
$[ 301, 1 ]$ & 		2065 &	343 & 	392 &  	\textbf{539}	& 1.571\\
$[ 310, 1 ]$ & 		3010 &	500 &	600 &  	\textbf{800}	& 1.600\\
$[ 312, 9 ]$ & 		1752 &	432 & 	576 &  	576		& 1.333\\
$[ 312, 10 ]$ & 	1764 &	432 &  	576 &  	576		& 1.333\\
$[ 312, 12 ]$ & 	1764 &	432 & 	576 &  	576		& 1.333\\
$[ 312, 46 ]$ & 	1960 &	512 & 	576 & 	\textbf{640}	& 1.250\\
$[ 312, 49 ]$ & 	1752 &	432 & 	576 &  	576		& 1.333\\
$[ 312, 52 ]$ & 	1176 &	384 & 	480 &  	480		& 1.250\\
$[ 312, 53 ]$ & 	1192 &	384 & 	480 &  	480		& 1.250\\
$[ 364, 5 ]$ & 		1372 &	448 & 	560 &  	*560		& 1.250\\
$[ 364, 6 ]$ & 		1396 &	448 & 	560 &  	*560    	& 1.250\\
$[ 372, 7 ]$ & 		2172 &	432 & 	504 &  	*\textbf{720}	& 1.667\\
$[ 384, 609 ]$ & 	1740 & 	864 & 	900 &  	\textbf{1008}	& 1.167\\
$[ 384, 618 ]$ & 	2796 &	864 & 	900 &  	\textbf{1008}	& 1.167\\
$[ 390, 1 ]$ & 		2190 &	540 & 	720 &  	720		& 1.333\\
$[ 390, 3 ]$ & 		2214 &	540 & 	720 &  	720		& 1.333\\
$[ 392, 41 ]$ & 	1352 &	448 & 	560 &  	*\textbf{784}	& 1.750\\
$[ 405, 15 ]$ & 	2005 &	675 & 	810 &  	*\textbf{945}	& 1.400\\
$[ 416, 66 ]$ & 	1568 &	512 & 	640 &  	*640		& 1.250\\
$[ 416, 67 ]$ & 	1584 &	512 & 	640 &  	**640		& 1.250\\
$[ 416, 68 ]$ & 	1592 &	512 & 	640 &  	**640		& 1.250\\
$[ 416, 69 ]$ & 	1592 &	512 & 	640 &  	**640		& 1.250\\
$[ 416, 81 ]$ & 	1584 &	512 & 	640 &  	**640		& 1.250\\
$[ 416, 82 ]$ & 	2360 &	512 & 	640 &  	**640		& 1.250\\
$[ 416, 83 ]$ & 	2360 &	512 &	640 &  	**640		& 1.250\\
$[ 416, 84 ]$ & 	2360 &	512 &	640 &  	*640		& 1.250\\
$[ 416, 85 ]$ & 	2360 &	512 &	640 &  	*640		& 1.250\\
$[ 416, 202 ]$ & 	1568 &	512 &	640 &  	*640		& 1.250\\
$[ 416, 203 ]$ & 	1584 &	512 &	640 &  	*640		& 1.250\\
$[ 416, 204 ]$ & 	1584 &	512 &	640 &  	*640		& 1.250\\
$[ 416, 206 ]$ & 	2352 &	512 &	640 &  	*640		& 1.250\\
$[ 416, 208 ]$ & 	2352 &	512 &	640 &  	*640		& 1.250\\
$[ 416, 211 ]$ & 	1584 &	512 &	640 &  	*640		& 1.250\\
$[ 416, 233 ]$ & 	1568 &	512 &	640 &  	*640		& 1.250\\
$[ 468, 7 ]$ & 		5220 &	576 &  	720 &  	*\textbf{864}	& 1.500\\
$[ 468, 9 ]$ & 		1764 &	576 &  	720 &  	*720		& 1.250\\
$[ 468, 10 ]$ & 	1796 &	576 &  	720 &  	*720		& 1.250\\
$[ 468, 31 ]$ & 	4380 &	864 &  	936 &  	\textbf{1080}	& 1.250\\
$[ 468, 33 ]$ & 	2628 &	648 &  	864 &  	*864		& 1.333\\
$[ 468, 35 ]$ & 	2652 &	648 &  	864 &  	*864		& 1.333\\
$[ 468, 36 ]$ & 	1764 &	576 &  	720 &  	*720		& 1.250\\
$[ 468, 37 ]$ & 	1788 &	576 &  	720 &  	*720		& 1.250\\
$[ 468, 38 ]$ & 	1796 &	576 &  	720 &  	*720		& 1.250\\
$[ 520, 38 ]$ & 	3528 &	640 &  	800 &  	*800		& 1.250\\
$[ 520, 39 ]$ & 	1960 &	640 &  	800 &  	*800		& 1.250\\
$[ 520, 44 ]$ & 	1992 &	640 &  	800 &  	**800		& 1.250\\
$[ 546, 1 ]$ & 		3066 &	756 &  	1008 &	**1008		& 1.333\\
$[ 546, 3 ]$ & 		3102 &	756 &  	1008 &	**1008		& 1.333\\
$[ 546, 5 ]$ & 		3138 &	756 &  	1008 &	**1008		& 1.333\\
$[ 546, 6 ]$ & 		3138 &	756 &  	1008 &	**1008		& 1.333\\
$[ 558, 7 ]$ & 		3258 &	648 &  	756 &  	**\textbf{972} 	& 1.500\\
$[ 572, 5 ]$ & 		2156 &	704 &  	880 &  	**880		& 1.250\\
$[ 572, 6 ]$ & 		2196 &	704 &  	880 &  	**880		& 1.250\\
$[ 588, 47 ]$ & 	2316 &	672 &  	840 &  	**\textbf{1176}	& 1.750\\
$[ 588, 48 ]$ & 	2324 &	672 &  	840 &  	\textbf{1344}	& 2.000\\
$[ 588, 49 ]$ & 	2028 &	672 &  	840 &  	\textbf{1008}	& 1.500\\
$[ 600, 162 ]$ & 	3880 &	1280 &	1344 &	\textbf{1536}	& 1.200\\
$[ 602, 2 ]$ & 		4230 &	686 &	784 &  	*\textbf{1078}	& 1.571\\
$[ 620, 7 ]$ & 		6020 &	1000 &	1200 &	\textbf{1600}	& 1.600\\
$[ 624, 8 ]$ & 		3504 &	864 &  	1152 & 	**1152		& 1.333\\
$[ 624, 9 ]$ & 		3528 &	864 &  	1152 & 	**1152		& 1.333\\
$[ 624, 10 ]$ & 	3540 &	864 &  	1152 & 	**1152		& 1.333\\
$[ 624, 11 ]$ & 	3540 &	864 &  	1152 & 	**1152		& 1.333\\
$[ 624, 18 ]$ & 	3528 &	864 &  	1152 & 	*1152		& 1.333\\
$[ 624, 19 ]$ & 	5268 &	864 &  1152 &  	*1152		& 1.333\\
$[ 624, 21 ]$ & 	5268 &	864 &  1152 &  	*1152		& 1.333\\
$[ 624, 23 ]$ & 	3528 &	864 &  1008 &  	\textbf{1152}	& 1.333\\
$[ 624, 117 ]$ & 	6920 &	864 &  1152 &  	*\textbf{1440} 	& 1.667\\
$[ 624, 118 ]$ & 	6984 &	864 &  1152 &  	*\textbf{1440} 	& 1.667\\
$[ 624, 124 ]$ & 	3920 &	768 &  960 &  	*960		& 1.250\\
$[ 624, 126 ]$ & 	3928 &	768 &  960 &  	*960		& 1.250\\
$[ 624, 138 ]$ & 	3504 &	864 &  1152 &  	*1152		& 1.333\\
$[ 624, 139 ]$ & 	3528 &	864 &  1152 &  	*1152		& 1.333\\
$[ 624, 140 ]$ & 	3528 &	864 &  1152 &  	*1152		& 1.333\\
$[ 624, 141 ]$ & 	5256 &	864 &  1152 &  	*1152		& 1.333\\
$[ 624, 142 ]$ & 	5256 &	864 &  1152 &  	*1152		& 1.333\\
$[ 624, 143 ]$ & 	5256 &	864 &  1152 &  	*1152		& 1.333\\
$[ 624, 144 ]$ & 	5256 &	864 &  1152 &  	*1152		& 1.333\\
$[ 624, 146 ]$ & 	3528 &	864 &  1152 &  	*1152		& 1.333\\
$[ 624, 155 ]$ & 	2352 &	768 &  960 &  	*960		& 1.250\\
$[ 624, 156 ]$ & 	2376 &	768 &  960 &  	*960		& 1.250\\
$[ 624, 159 ]$ & 	2376 &	768 &  960 &  	*960		& 1.250\\
$[ 624, 162 ]$ & 	2384 &	768 &  960 &  	*960		& 1.250\\
$[ 624, 163 ]$ & 	2392 &	768 &  960 &  	*960		& 1.250\\
$[ 624, 166 ]$ & 	2392 &	768 &  960 &  	*960		& 1.250\\
$[ 624, 238 ]$ & 	5888 &	1152 &  1296 &	\textbf{1536}	& 1.333\\
$[ 624, 240 ]$ & 	5880 &	1152 & 1296 &	\textbf{1440}	& 1.250\\
$[ 624, 243 ]$ & 	3920 &	1024 & 1152 & 	\textbf{1280}	& 1.250\\
$[ 624, 246 ]$ & 	3504 &	864 & 1152 & 	*1152		& 1.333\\
\hline
\multicolumn{6}{l}{}
\end{tabular}
\begin{tabular}{|lr|rrr|l|} 
\hline
Id($G$)  & $D_3(G)$ &  $\beta_\mathrm{g}(G)$ & $\ell(G)$ & $\ell'(G)$ & $\ell'/\beta_\mathrm{g}$\\
\hline
$[ 624, 249 ]$ & 	2352 &	768 & 960 & 	*960		& 1.250\\
$[ 624, 250 ]$ & 	2384 &	768 & 960 & 	*960		& 1.250\\
$[ 656, 50 ]$ & 	5136 &	1024 & 1088 & 	\textbf{1152}	& 1.125\\
$[ 676, 3 ]$ & 		2692 &	832 & 1040 & 	*1040		& 1.250\\
$[ 676, 9 ]$ & 		2548 &	832 & 1040 & 	*1040		& 1.250\\
$[ 676, 10 ]$ & 	2692 &	832 & 1040 & 	*1040		& 1.250\\
$[ 676, 11 ]$ & 	2692 &	832 & 1040 & 	*1040		& 1.250\\
$[ 676, 12 ]$ & 	2596 &	832 & 1040 & 	*1040		& 1.250\\
$[ 686, 5 ]$ & 		8414 &	1372 & 1568 & 	\textbf{2156}	& 1.571\\
$[ 702, 7 ]$ & 		3942 &	972 & 1296 & 	*1296		& 1.333\\
$[ 702, 8 ]$ & 		4014 &	972 & 1296 & 	*1296		& 1.333\\
$[ 702, 9 ]$ &		4014 &	972 & 1296 & 	*1296		& 1.333\\
$[ 702, 12 ]$ & 	4014 &	972 & 1296 & 	*1296		& 1.333\\
$[ 702, 18 ]$ & 	4134 &	972 & 1296 & 	*1296		& 1.333\\
$[ 702, 19 ]$ & 	4134 &	972 & 1296 & 	*1296		& 1.333\\
$[ 702, 20 ]$ & 	3990 &	972 & 1296 & 	*1296		& 1.333\\
$[ 702, 49 ]$ & 	3942 &	972 & 1296 & 	*1296		& 1.333\\
$[ 702, 51 ]$ & 	3978 &	972 & 1296 & 	*1296		& 1.333\\
$[ 702, 53 ]$ & 	3990 &	972 & 1296 & 	*1296		& 1.333\\
$[ 710, 1 ]$ & 		7010 &	1000 & 1100 & 	*\textbf{1300} 	& 1.300\\
$[ 726, 6 ]$ & 		4326 &	792 & 924 & 	*\textbf{1188}	& 1.500\\
$[ 728, 32 ]$ & 	5096 &	896 & 1120 & 	*1120		& 1.250\\
$[ 728, 34 ]$ & 	2744 &	896 & 1120 & 	*1120		& 1.250\\
$[ 728, 35 ]$ & 	2792 &	896 & 1120 & 	*1120		& 1.250\\
$[ 732, 7 ]$ & 		8652 &	864 & 1008 & 	\textbf{1728}	& 2.000\\
$[ 744, 8 ]$ & 		4344 &	864 & 1008 & 	*\textbf{1296}	& 1.500\\
$[ 744, 9 ]$ & 		4356 &	864 & 1008 & 	*\textbf{1296}	& 1.500\\
$[ 744, 11 ]$ & 	4356 &	864 & 1008 & 	*\textbf{1296}	& 1.500\\
$[ 744, 44 ]$ & 	4344 &	864 & 1008 & 	*\textbf{1296}	& 1.500\\
$[ 750, 28 ]$ & 	3942 &	900 & 1050 & 	*\textbf{1350}	& 1.500\\
$[ 780, 20 ]$ & 	7884 &	1080 & 1440 & 	*1440		& 1.333\\
$[ 780, 21 ]$ & 	4380 &	1080 & 1440 & 	*1440		& 1.333\\
$[ 780, 23 ]$ & 	4428 &	1080 & 1440 & 	*1440		& 1.333\\
$[ 780, 24 ]$ & 	2940 &	960 & 1200 & 	*1200		& 1.250\\
$[ 780, 29 ]$ & 	2988 &	960 & 1200 & 	*1200		& 1.250\\
$[ 780, 30 ]$ & 	2980 &	960 & 1200 & 	*1200		& 1.250\\
$[ 780, 35 ]$ & 	2996 &	960 & 1200 & 	*1200		& 1.250\\
$[ 784, 105 ]$ & 	4632 &	896 & 1120 & 	\textbf{1792}	& 2.000\\
$[ 784, 106 ]$ & 	4632 &	896 & 1120 & 	*\textbf{1344}	& 1.500\\
$[ 784, 110 ]$ & 	6168 &	896 & 1120 & 	\textbf{1792}	& 2.000\\
$[ 784, 113 ]$ & 	3088 &	896 & 1120 & 	\textbf{1792}	& 2.000\\
$[ 784, 114 ]$ & 	3096 &	896 & 1120 & 	\textbf{1792}	& 2.000\\
$[ 784, 117 ]$ & 	3096 &	896 & 1120 & 	\textbf{1344}	& 1.500\\
$[ 784, 124 ]$ & 	2704 &	896 & 1120 & 	\textbf{1344}	& 1.500\\
$[ 784, 125 ]$ & 	2808 &	896 & 1120 & 	\textbf{1344}	& 1.500\\
$[ 784, 126 ]$ & 	2904 &	896 & 1120 & 	\textbf{1344}	& 1.500\\
$[ 832, 181 ]$ & 	3136 &	1024 & 1280 & 	*1280		& 1.250\\
$[ 832, 182 ]$ & 	3168 &	1024 & 1280 & 	*1280		& 1.250\\
$[ 832, 183 ]$ & 	3248 &	1024 & 1280 & 	*1280		& 1.250\\
$[ 832, 184 ]$ & 	3248 &	1024 & 1280 & 	*1280		& 1.250\\
$[ 832, 185 ]$ & 	3256 &	1024 & 1280 & 	*1280		& 1.250\\
$[ 832, 186 ]$ & 	2156 &	1024 & 1280 & 	*1280		& 1.250\\
$[ 832, 187 ]$ & 	3192 &	1024 & 1280 & 	*1280		& 1.250\\
$[ 832, 188 ]$ & 	3192 &	1024 & 1280 & 	*1280		& 1.250\\
$[ 832, 200 ]$ & 	3184 &	1024 & 1280 & 	*1280		& 1.250\\
$[ 832, 201 ]$ & 	3248 &	1024 & 1280 & 	*1280		& 1.250\\
$[ 832, 203 ]$ & 	4824 &	1024 & 1280 & 	*1280		& 1.250\\
$[ 832, 206 ]$ & 	4760 &	1024 & 1280 & 	*1280		& 1.250\\
$[ 832, 207 ]$ & 	4760 &	1024 & 1280 & 	*1280		& 1.250\\
$[ 832, 212 ]$ & 	4720 &	1024 & 1280 & 	*1280		& 1.250\\
$[ 832, 213 ]$ & 	4720 &	1024 & 1280 & 	*1280		& 1.250\\
$[ 832, 230 ]$ & 	3168 &	1024 & 1280 & 	*1280		& 1.250\\
$[ 832, 231 ]$ & 	3184 &	1024 & 1280 & 	*1280		& 1.250\\
$[ 832, 232 ]$ & 	3248 &	1024 & 1280 & 	*1280		& 1.250\\
$[ 832, 233 ]$ & 	3248 &	1024 & 1280 & 	*1280		& 1.250\\
$[ 832, 237 ]$ & 	4784 &	1024 & 1280 & 	*1280		& 1.250\\
$[ 832, 238 ]$ & 	4720 &	1024 & 1280 &	*1280		& 1.250\\
$[ 832, 240 ]$ & 	4784 &	1024 & 1280 & 	*1280		& 1.250\\
$[ 832, 241 ]$ & 	5560 & 	1024 & 1280 & 	*1280		& 1.250\\
$[ 832, 242 ]$ & 	5496 &	1024 & 1280 & 	*1280		& 1.250\\
$[ 832, 245 ]$ & 	5560 &	1024 & 1280 & 	*1280		& 1.250\\
$[ 832, 246 ]$ & 	5496 &	1024 & 1280 & 	*1280		& 1.250\\
$[ 832, 254 ]$ & 	3288 &	1024 & 1280 & 	*1280		& 1.250\\
$[ 832, 258 ]$ & 	4760 &	1024 & 1280 & 	*1280		& 1.250\\
$[ 832, 264 ]$ & 	4760 &	1024 & 1280 & 	*1280		& 1.250\\
$[ 858, 1 ]$ & 		4818 &	1188 & 1584 & 	*1584		& 1.333\\
$[ 858, 3 ]$ & 		4878 &	1188 & 1584 & 	*1584		& 1.333\\
$[ 876, 7 ]$ & 		10380 &	1728 & 1872 & 	\textbf{2880} 	& 1.667\\
$[ 884, 6 ]$ & 		3332 &	1088 & 1360 & 	*1360		& 1.250\\
$[ 884, 7 ]$ & 		3524 &	1088 & 1360 & 	*1360		& 1.250\\
$[ 884, 8 ]$ & 		3524 &	1088 & 1360 & 	*1360		& 1.250\\
$[ 884, 9 ]$ & 		3396 &	1088 & 1360 & 	*1360		& 1.250\\
$[ 888, 45 ]$ & 	10392 &	1728 & 1800 & 	\textbf{1872}	& 1.083\\
$[ 903, 1 ]$ & 		18543 &	1323 & 1764 & 	\textbf{2646}	& 2.000\\
$[ 903, 2 ]$ & 		6195 &	1029 & 1176 & 	*\textbf{1470}	& 1.428\\
$[ 915, 1 ]$ & 		13515 &	1125 & 1350 & 	*\textbf{1800}	& 1.600\\
$[ 930, 3 ]$ & 		9030 &	1500 & 1800 & 	\textbf{2400}	& 1.600\\
$[ 930, 5 ]$ & 		9050 &	1500 & 1800 & 	\textbf{2400}	& 1.600\\
$[ 930, 6 ]$ & 		5430 &	1080 & 1260 & 	*\textbf{1440}	& 1.333\\
$[ 930, 8 ]$ & 		5454 &	1080 & 1260 & 	*\textbf{1440}	& 1.333\\
$[ 968, 35 ]$ & 	7688 &	1408 & 1584 & 	\textbf{2816}	& 2.000\\
$[ 968, 36 ]$ & 	6412 &	1408 & 1584 & 	*\textbf{1936}	& 1.222\\
$[ 968, 37 ]$ & 	7692 &	1408 & 1584 & 	\textbf{2816}	& 2.000\\
$[ 979, 1 ]$ & 		10659 &	1331 & 1452 & 	*\textbf{1573}	& 1.181\\
$[ 980, 18 ]$ & 	3860 &	1120 & 1400 & 	*\textbf{1680}	& 1.500\\
$[ 980, 23 ]$ & 	3876 &	1120 & 1400 & 	\textbf{2240}	& 2.000\\
$[ 980, 24 ]$ & 	3380 &	1120 & 1400 & 	\textbf{1680}	& 1.500\\
$[ 980, 27 ]$ & 	3588 &	1120 & 1400 & 	\textbf{1680}	& 1.500\\
$[ 980, 28 ]$ & 	3780 &	1120 & 1400 & 	\textbf{1680}	& 1.500\\
$[ 984, 33 ]$ & 	7704 &	1536 & 1600 & 	\textbf{1728}	& 1.125\\
$[ 984, 34 ]$ & 	7720 &	1536 & 1600 & 	\textbf{1728}	& 1.125\\
$[ 988, 5 ]$ & 		3724 &	1216 & 1520 & 	*1520		& 1.125\\
$[ 988, 6 ]$ & 		3796 &	1216 & 1520 & 	*1520		& 1.125\\
$[ 994, 1 ]$ & 		13734 &	1372 & 1568 & 	*\textbf{2156}	& 1.571\\\hline
\end{tabular}
\normalsize
\caption{Results of the upgrade process for subsets with input from Table \ref{tab:BiggGrp}.}
\label{tab:BiggSet}
\end{table}

\begin{table}
\footnotesize
\begin{tabular}{|lrr|rrr|l|}
\hline
Id($G$) & $D_3(G)$ & $\beta_\mathrm{g}(G)$ & $\ell_\text{\textsc{RandomDelete}}$ & $\ell_\text{\textsc{MaxTripleDelete}}$ & $\ell_\text{\textsc{MaxQuotientDelete}}$ & $\ell / \beta_\mathrm{g}$\\
\hline
$[ 168, 1 ]$ 	& 888  & 216  & \textbf{288} 	& 216 		& 216 		& 1.333\\
$[ 168, 10 ]$ 	& 888  & 216  & \textbf{288} 	& 216 		& 216 		& 1.333\\
$[ 171, 3 ]$ 	& 1467 & 243  & 243 		& \textbf{324} 	& 243		& 1.333\\
$[ 192, 185 ]$ 	& 1856 & 384  & 384 		& \textbf{480} 	& 384		& 1.250\\
$[ 196, 8 ]$ 	& 772  & 224  & \textbf{240} 	& \textbf{280} 	& \textbf{240}	& 1.250\\
$[ 252, 18 ]$ 	& 1356 & 324  & 324 		& \textbf{432} 	& 324		& 1.333\\
$[ 256, 5713 ]$ & 1392 & 512  & 512 		& \textbf{640} 	& 512		& 1.250\\
$[ 256, 5715 ]$ & 1392 & 512  & 512 		& \textbf{640} 	& 512		& 1.250\\
$[ 336, 121 ]$ 	& 1800 & 432  & \textbf{576} 	& 432 		& 432		& 1.333\\
$[ 342, 8 ]$ 	& 2934 & 486  & 486 		& \textbf{648} 	& 486		& 1.333\\
$[ 384, 576 ]$ 	& 2880 & 768  & 768 		& \textbf{960} 	& 768		& 1.250\\
$[ 384, 617 ]$ 	& 2412 & 864  & 864 		& \textbf{1008} & 864		& 1.167\\
$[ 392, 40 ]$ 	& 1544 & 448  & \textbf{464} 	& \textbf{560} 	& \textbf{464}	& 1.250\\
$[ 406, 1 ]$ 	& 5502 & 686  & \textbf{784} 	& 686 		& 686		& 1.142\\
$[ 420, 3 ]$ 	& 2268 & 540  & 540 		& \textbf{720} 	& 540		& 1.333\\
$[ 420, 15 ]$ 	& 4092 & 864  & \textbf{936} 	& 864 		& 864		& 1.083\\
$[ 441, 9 ]$ 	& 3249 & 729  & \textbf{810} 	& \textbf{810} 	& \textbf{810}	& 1.111\\
$[ 486, 19 ]$ 	& 2370 & 972  & 972 		& \textbf{1134} & 972		& 1.167\\
$[ 486, 176 ]$ 	& 6258 & 972  & 972 		& \textbf{1296} & 972		& 1.333\\
$[ 486, 178 ]$ 	& 6258 & 972  & 972 		& \textbf{1296} & 972		& 1.333\\
$[ 500, 17 ]$ 	& 8340 & 1000 & \textbf{1200} 	& \textbf{1200} & 1000		& 1.200\\
$[ 500, 21 ]$ 	& 8356 & 1000 & \textbf{1200} 	& \textbf{1200} & 1000		& 1.200\\
$[ 504, 44 ]$ 	& 2712 & 648  & 648 		& \textbf{864} 	& 648		& 1.333\\
$[ 504, 68 ]$ 	& 4440 & 864  & \textbf{1008} 	& 864 		& 864		& 1.167\\
$[ 504, 85 ]$ 	& 2724 & 648  & 648 		& \textbf{864} 	& 648		& 1.333\\
$[ 504, 88 ]$ 	& 2712 & 648  & 648 		& \textbf{864} 	& 648		& 1.333\\
$[ 513, 13 ]$ 	& 4401 & 729  & \textbf{972} 	& \textbf{972} 	& 729		& 1.333\\
$[ 576, 183 ]$ 	& 3924 & 768  & \textbf{864} 	& \textbf{960} 	& \textbf{800}	& 1.250\\
$[ 576, 185 ]$ 	& 2984 & 768  & \textbf{800} 	& 768 		& \textbf{800}	& 1.041\\
$[ 576, 188 ]$ 	& 5792 & 1152 & 1152 		& \textbf{1440} & 1152		& 1.250\\
$[ 588, 16 ]$ 	& 2828 & 756  & 756 		& \textbf{1008} & 756		& 1.333\\
$[ 588, 19 ]$ 	& 3180 & 756  & 756 		& \textbf{1008} & 756		& 1.333\\
$[ 588, 21 ]$ 	& 3252 & 756  & \textbf{1008} 	& 756 		& 756		& 1.333\\
$[ 588, 22 ]$ 	& 3468 & 756  & \textbf{1008} 	& 756 		& 756		& 1.333\\
$[ 657, 3 ]$ 	& 5841 & 729  & \textbf{810} 	& \textbf{810} 	& \textbf{810}	& 1.111\\
$[ 672, 37 ]$ 	& 6228 & 864  & \textbf{1152} 	& 864 		& 864		& 1.333\\
$[ 672, 39 ]$ 	& 6228 & 864  & \textbf{1152} 	& 864 		& 864		& 1.333\\
$[ 684, 17 ]$ 	& 5868 & 972  & 972 		& \textbf{1296} & 972		& 1.333\\
$[ 684, 30 ]$ 	& 5868 & 972  & 972 		& \textbf{1296} & 972		& 1.333\\
$[ 684, 31 ]$ 	& 5922 & 972  & 972 		& \textbf{1296} & 972		& 1.333\\
$[ 737, 1 ]$ 	& 7997 & 1331 & \textbf{1452} 	& \textbf{1452} & \textbf{1452}	& 1.090\\
$[ 756, 27 ]$ 	& 4380 & 972 & 972 & \textbf{1296} & 972 & 1.333\\
$[ 756, 28 ]$ 	& 4380 & 972 & 972 & \textbf{1296} & 972 & 1.333\\
$[ 756, 29 ]$ 	& 4092 & 972 & 972 & \textbf{1296} & 972 & 1.333\\
$[ 756, 31 ]$ 	& 4380 & 972 & 972 & \textbf{1296} & 972 & 1.333\\
$[ 756, 138 ]$ 	& 4068 & 972 & 972 & \textbf{1296} & 972 & 1.333\\
$[ 756, 140 ]$ 	& 4092 & 972 & 972 & \textbf{1296} & 972 & 1.333\\
$[ 756, 151 ]$ 	& 7740 & 1728 & \textbf{1872} & 1728 & \textbf{1872} & 1.083\\
$[ 812, 1 ]$ 	& 11004& 1372 & \textbf{1568} & 1372 & 1372 & 1.142\\
$[ 840, 3 ]$ 	& 4536 & 1080 & 1080 & \textbf{1440} & 1080 & 1.333\\
$[ 840, 15 ]$ 	& 8184 & 1080 & 1080 & \textbf{1440} & 1080 & 1.333\\
$[ 840, 33 ]$ 	& 4548 & 1080 & 1080 & \textbf{1440} & 1080 & 1.333\\
$[ 840, 36 ]$ 	& 4536 & 1080 & 1080 & \textbf{1440} & 1080 & 1.333\\
$[ 855, 3 ]$ 	& 7335 & 1215 & 1215 & \textbf{1620} & 1215 & 1.333\\
$[ 882, 37 ]$ 	& 6498 & 1458 & \textbf{1620} & \textbf{1620} & \textbf{1539} & 1.111\\
$[ 924, 3 ]$ 	& 5004 & 1188 & 1188 & \textbf{1584} & 1188 & 1.333\\\hline
\multicolumn{3}{|r|}{\# upgrades} & 23 & 43 & 9 &\\
\multicolumn{6}{|r|}{max.} & 1.333\\\hline
\end{tabular}
\caption{Results of two upgrade steps (process for subsets) started after one reduction step with input from \cite{HedtkeMurthy2011}.}
\label{tab:Red}
\end{table}

\providecommand{\bysame}{\leavevmode\hbox to3em{\hrulefill}\thinspace}
\providecommand{\MR}{\relax\ifhmode\unskip\space\fi MR }
\providecommand{\MRhref}[2]{%
  \href{http://www.ams.org/mathscinet-getitem?mr=#1}{#2}
}
\providecommand{\href}[2]{#2}

\end{document}